\numberwithin{equation}{section}
\newtheorem{defn}{DEFINITION}[section]
\newtheorem{theorem}{Theorem}[section]
\newtheorem{example}[theorem]{EXAMPLE}
\newtheorem{lemma}[theorem]{Lemma}
\newtheorem{prop}[theorem]{Proposition}
\newtheorem{remark}[theorem]{REMARK}
\def \begineq{\begin{equation}}
\def \endeq{\end{equation}}
\def \bb{\mathbb}
\def \CC{{\bb{C}}}
\def \QQ{{\bb{Q}}}
\def \RR{{\bb{R}}}
\def \ZZ{{\bb{Z}}}
\def \({\left(}
\def \){\right)}
\def \<{\langle}
\def \>{\rangle}
\def \bar{\overline}
\def \tensor{\otimes}
\begin{document}

\title{On Quasitoric Orbifolds}

\author[M. Poddar]{Mainak Poddar}

\address{Theoretical Statistics and Mathematics Unit, Indian
Statistical Institute, 203 B. T. Road, Kolkata 700108, India}

\email{mainak@isical.ac.in}

\author[S. Sarkar]{Soumen Sarkar}

\address{Theoretical Statistics and Mathematics Unit, Indian
Statistical Institute, 203 B. T. Road, Kolkata 700108, India}

\email{soumens$_-$r@isical.ac.in}

\subjclass[2000]{57R19, 57R91}

\keywords{Orbifold, torus action, cohomology ring, stable complex structure}

\abstract Quasitoric spaces were introduced by Davis and Januskiewicz in their 1991 Duke paper. There
they extensively studied topological invariants of quasitoric manifolds.
 These manifolds are generalizations or topological counterparts of nonsingular projective
 toric varieties.
 In this article we study structures and invariants of quasitoric orbifolds. In particular, we
 discuss equivalent definitions and determine the orbifold fundamental group, rational homology groups
 and cohomology ring of a quasitoric orbifold. We determine whether any quasitoric orbifold can be the quotient
 of a smooth manifold by a finite group action or not. 
  We prove existence of stable almost complex structure
 and describe the Chen-Ruan cohomology groups of an almost complex quasitoric orbifold. 
\endabstract

\maketitle

\section{Introduction}\label{intro}

 Quasitoric spaces were introduced by Davis and Januskiewicz in \cite{[DJ]} where  topological
 invariants of
 quasitoric manifolds were extensively studied. The term {\it quasitoric} however first appeared
 in the survey \cite{[BP]} which is also a good reference for many interesting developments and
 applications.
  Quasitoric manifolds enjoy many cohomological
 properties of nonsingular toric varieties but they do not necessarily have algebraic or
  complex structure;
 see Civan \cite{[Civ]} for some interesting examples.
 Hence these properties of
 toric varieties are not contingent upon such structures but a consequence of the torus action.

  In this article we study topological invariants and stable almost complex structure
 on  quasitoric orbifolds. In particular, we
 discuss equivalent definitions and determine the orbifold fundamental group, rational homology groups
 and cohomology ring of a quasitoric orbifold. We prove existence of stable almost complex structure
 and describe the Chen-Ruan cohomology groups of an almost complex quasitoric orbifold.
 Some of these results are analogues of well known facts about complete simplicial toric varieties in algebraic
 geometry. However in the tradition of Davis and Januskiewicz, our proofs are purely topological.
 In the sequel we will study existence of almost complex structure and
 almost complex morphisms.  We expect this to be an interesting category.

 A quasitoric manifold $X^{2n}$ may be defined as an even dimensional smooth
  manifold with  a
 locally standard action of the compact torus $T^n = U(1)^n$
  such that the orbit space
 has the structure of an $n-$dimensional polytope. Locally standard means that locally the
 action is $\theta$--equivariantly diffeomorphic to the standard action of
 $T^n$ on $\CC^n$, where $\theta$ is an automorphism of $T^n$. That is, every point
 $x \in X$ has a $T^n$--invariant neighborhood
 $V_x$ and a diffeomorphism $h : V_x \to U$, where $U$ is an  open set in $\CC^n$ invariant under the
 standard action of $T^n$, and an automorphism $\theta_x : T^n \to T^n$ such that $ h (t\cdot y) = \theta_x (t) \cdot h(y)$ for
  all $y \in V_x$. We point out that there are even
 dimensional compact manifolds with locally standard $T^n$ action for which the orbit space is
 not a homology polytope; see Example 4.3 of \cite{[MP]}.

  The standard action of $U(1)^n$ on $\CC^n$
  has orbit space $\RR^n_+ =\{(r_1, \ldots, r_n) \in \RR^n \, | \, r_i \ge 0 \}$.
 The orbit space of the torus action on a quasitoric manifold is therefore a simple
 polytope in $\RR^n$; that is, exactly $n$ facets meet at each vertex.
 This leads to a combinatorial model where a quasitoric manifold is defined as the quotient
  of a trivial torus  bundle $P^n \times T^n$  on a polytope  $P^n$ by
  the action of certain
 torus subgroups of $T^n$
 on the fibers over the faces of the polytope.
  The dimension of the isotropy
 torus subgroup over
  the relative interior of a face matches the codimension of the face.

  More precisely, $T^n$ can be identified with $(\ZZ^n \otimes_{\ZZ} \RR)/\ZZ^n$.
   The isotropy subgroup of each facet $F_i$ is a circle
 subgroup corresponding to a primitive vector $\lambda_i$ in $\ZZ^n$, which is determined up to choice of
 sign. The vector $\lambda_i$ is called a characteristic vector.
  The isotropy subgroup corresponding to a face which is the intersection of facets $F_{i_1}, \ldots, F_{i_k}$
 is the subtorus of $T^n$ corresponding to the subgroup of $\ZZ^n$ generated by $\lambda_{i_1}, \ldots, \lambda_{i_k}$.
 To ensure smoothness, it has to be assumed that for every vertex of the polytope the corresponding collection
  of $n$ characteristic vectors forms a basis of $\ZZ^n$ over $\ZZ$. It turns out that different choices of
  signs of the $\lambda_i$ correspond to different stable almost complex structures on the quasitoric manifold,
   see \cite{[BR]}. A characteristic vector with a definite choice of sign is
  called a {\it dicharacteristic vector}. A quasitoric manifold whose characteristic vectors have been assigned
  definite signs is called {\it omnioriented}. We will apply the same terminology in the case of orbifolds.
  Throughout, we will denote an orbifold by a bold upper-case letter and its underlying topological
  space by the same letter in normal font.

  Our first  definition of a quasitoric orbifold is constructive and
  will readily yield a differentiable orbifold atlas. Namely,
  the underlying topological space $X$ of an $n-$dimensional quasitoric orbifold $\bf X$
  is defined to be the quotient of $P^n \times T^n$ by the action of $k$--dimensional tori on the fibers
  over codimension $k$ faces of $P^n$, via some finite covering homomorphisms onto subtori of $T^n$.
  The precise definition is given in section \ref{QO}.
   The implication for the
  characteristic vectors is that they need no longer be primitive, and the collection
  of characteristic vectors corresponding to any vertex of $P^n$ need not form
  a basis of $\ZZ^n$ over $\ZZ$, but should be $\ZZ$--linearly independent.

   We follow this up
   with an equivalent axiomatic definition of a (differentiable) quasitoric orbifold akin
    to the definition
   of a quasitoric manifold via locally standard action. We also give a classification result,
   Lemma \ref{classi}.
   Later, in section \ref{almostc}, it will be evident that our definitions of a quasitoric
   orbifold are more general
    than the original definition in \cite{[DJ]}, as the quotient ${\mathcal Z}(P)/ T_K $
 of a higher dimensional  manifold by a smooth torus action.  Note that Hattori and Masuda
  \cite{[HM]} have introduced an even more
 general class of spaces called torus orbifolds, relaxing the constraint of local standardness
 on the $T^n$ action.

 Since we restrict the orbit space $P$ to be compact, quasitoric orbifolds are compact by
 definition. However this restriction is made mainly to conform to established terminology and
 state the results in a convenient form.
 We occasionally take the liberty of using combinatorial model with noncompact base space $P$
 like at the beginning of section \ref{almostc}.

 A differentiable orbifold $\bf X$ is called a {\it global quotient} if it is diffeomorphic
 as an orbifold to the quotient orbifold $[M/G]$, where $M$ is a smooth manifold and $G$ is a
 finite group acting smoothly on $M$. It is an interesting problem to decide whether a given
 orbifold is a global quotient or not.  In section \ref{ofg}
 we solve the problem completely for quasitoric orbifolds
 by determining their {\it orbifold fundamental group} and {\it orbifold universal cover}. These invariants
 were introduced by Thurston \cite{[Th]}.

 In section \ref{hom} we compute the homology of quasitoric orbifolds with coefficients in
 $\QQ$. We need to generalize the notion of $CW-$complex a little bit for this purpose. In
 section \ref{cohom} we compute the rational cohomology ring of a quasitoric orbifold and
 show that it is isomorphic to a quotient of the Stanley-Reisner face ring of the base polytope
 $P$. These results are analogous to similar formulae for simplicial toric varieties. Our
 proofs are adaptations of the proofs in \cite{[DJ]} for quasitoric manifolds and are purely
 topological.

 In section \ref{almostc} we show the existence of a stable almost complex structure on a quasitoric
 orbifold corresponding to any given omniorientation,
 following the work of Buchstaber and Ray \cite{[BR]} in the manifold case. The universal
 orbifold cover of the quasitoric orbifold is used here. As in the manifold case, we show that
 the cohomology ring is generated by the first Chern classes of some complex rank one orbifold
 vector bundles, canonically associated to facets of $P^n$. We compute the top Chern number of
 an omnioriented quasitoric orbifold. We give a necessary condition for existence of torus invariant
 almost complex structure. Whether this condition is also sufficient remains open. Finally we compute
 the Chen-Ruan cohomology groups of an almost complex quasitoric orbifold. These will be used in the 
 sequel.

 We refer the reader to \cite{[ALR]} and references therein for definitions and facts concerning orbifolds.
  The reader may also consult \cite{[MM]} for an excellent exposition of the foundations of the theory of
  (reduced) differentiable orbifolds.

\section{Definition and orbifold structure}\label{QO}

 For any $\ZZ-$module $L$ denote
$L \tensor_{\ZZ} \RR$ by $L_R$.
Let $N$ be a free $\ZZ-$module of rank $n$.
The quotient $T_N = N_R /N$ is a compact $n$--dimensional torus.
Suppose $M$ is a free submodule of $N$ of rank $m$. Let $T_M$ denote
 the torus $M_R/M$. Let $j:M_R \to N_R $ and $j_{\ast}: T_M  \to N_R/M$ be the natural inclusions.
The inclusion $i: M \to N$ induces a homomorphism
$i_{\ast}: N_R/M \to N_R/N =T_N$ defined by $i_{\ast}(a + M) = a + N$ on cosets.
Denote the composition $i_{\ast} \circ j_{\ast}: T_M \to T_N$ by $\xi_M$.
$Ker(i_{\ast}) \simeq N/M$. If $m=n$, then $j_{\ast}$ is identity and $i_{\ast}$ is surjective.
In this case $\xi_M : T_M \to T_N $ is a surjective group homomorphism with kernel $G_M =N/M $,
 a finite abelian group.

\subsection{Definition by construction}
A $2n-$dimensional quasitoric orbifold may be constructed from the following data:
a simple polytope $P$ of dimension $n$ with facets $F_i$ indexed by $I= \{ 1, \ldots, m \}$,
 a free $\ZZ-$module $N$
of rank $n$, an assignment of a vector $\lambda_i$ in $N$ to each facet $F_i$ of $P$
 such that whenever
$F_{i_1}\cap \ldots \cap F_{i_k} \neq \emptyset$   the corresponding vectors $\lambda_{i_1}, \ldots, \lambda_{i_k}$ are linearly
independent over $\ZZ$. These data will be referred to as a {\it combinatorial model} and abbreviated as
 $(P,N,\{\lambda_i\})$. The vector $\lambda_i$ is called the dicharacteristic vector corresponding to the $i$-th facet.

 Each face $F$ of $P$ of codimension $k \ge 1$ is the intersection of a unique set of
 $k$ facets $F_{i_1}, \ldots, F_{i_k}$. Let $I(F) = \{i_1, \ldots, i_k \} \subset I$.
 Let $N(F)$ denote the submodule of $N$ generated by the characteristic vectors
 $\{ \lambda_j: j \in I(F) \}$. $T_{N(F)} = N(F)_R /N(F)$  is a torus of dimension $k$.
 We will adopt the convention that $T_{N(P)} = 1$.

 Define an equivalence relation $\sim$ on the product $P \times T_N$  by
 $(p,t) \sim (q,s)$ if $p=q$ and $s^{-1}t$ belongs to the image  of the map
  $\xi_{N(F)}: T_{N(F)} \to T_N$  where $F$ is the unique face of $P$ whose  relative interior
  contains $p$. Let $X = P \times
 T_N / \sim $ be the quotient space. Let $\mathfrak{q} : P \times T_N \to X$ denote the quotient map. Then
 $X$ is a $T_N$--space and let $\pi: X \to P $ defined by $\pi([p,t]^{\sim}) = p$
  be the associated map to the orbit space $P$. The space $X$ has the structure of an orbifold,
  which we explain next.

  Pick open
 neighborhoods $U_v$ of the vertices $v$ of $P$  such that $U_v$ is the complement in $P$ of all facets
 that do not contain $v$.
  Let $X_v = \pi^{-1}(U_v) =
  U_v \times T_N / \sim $.
 For a face $F$ of $P$ containing $v$ the inclusion $\{ \lambda_i:
 i \in I(F)\}$ in $\{ \lambda_i: i \in I(v)\}$ induces an inclusion of $N(F)$ in $N(v)$ whose image will be
 denoted by $N(v,F)$. Since $\{ \lambda_i: i \in I(F)\}$ extends to a basis $\{ \lambda_i:
 i \in I(v)\}$ of $N(v)$, the natural map from the torus $T_{N(v,F)} = N(v,F)_R / N(v,F)$ to $T_{N(v)}=
 N(v)_R /N(v)$ defined by $a + N(v,F) \mapsto a + N(v)$ is an injection. We will identify
 its image with $T_{N(v,F)}$. Denote the canonical isomorphism $T_{N(F)} \to T_{N(v,F)}$ by
 $i(v,F)$.

 Define an equivalence relation $\sim_v$ on $U_v \times T_{N(v)}$ by $(p,t)\sim_v (q,s)$ if
 $p=q$ and $s^{-1}t \in T_{N(v,F)}$ where $F$ is the face whose relative interior contains $p$.
 Then $W_v= U_v \times T_{N(v)}/ \sim_v$
  is $\theta-$equivariantly diffeomorphic to an open ball in $\CC^n$ where $\theta: T_{N(v)} \to U(1)^n$ is
  an isomorphism, see \cite{[DJ]}.
 Note that the map $\xi_{N(F)}$ factors as $\xi_{N(F)} = \xi_{N(v)} \circ  i(v,F)$. Since
 $i(v,F)$ is an isomorphism, $t \in T_{N(v,F)}$ if and only if $\xi_{N(v)} (t) \in {\rm image \,}
 \xi_{N(F)}$. Hence
  the map $\xi_{N(v)} : T_{N(v)} \to T_N$ induces a map $\xi_v: W_v \to X_v$ defined by
   $\xi_v([(p,t)]^{\sim_v}) = [(p,\xi_{N(v)}(t)) ]^{\sim}$ on equivalence classes.
    $G_v = N/N(v)$, the kernel of $\xi_{N(v)}$, is a finite
  subgroup of $T_{N(v)}$ and therefore has a natural smooth, free action on $T_{N(v)}$
  induced by the group operation.  This induces smooth action of $G_v$ on
  $W_v$. This action is not free in general. Since $T_N \cong T_{N(v)}/G_v $,  $X_v$
 is homeomorphic to the quotient space $W_v/G_v$. $(W_v, G_v, \xi_v )$ is an orbifold chart on $X_v$.
 To show the compatibility of these charts as $v$ varies, we introduce some additional charts.

 For any proper face $E$ of dimension $k \ge 1$ define $U_E = \bigcap U_v $, where the intersection
  is over all vertices $v$ that
 belong to $E$. Let $X_E = \pi^{-1}(U_E)$. For a face $F$ containing $E$ there is an
 injective homomorphism $T_{N(F)} \to T_{N(E)}$ whose image we denote by $T_{N(E,F)}$.

 Let \begin{equation}\label{GE}
 N^*(E)= (N(E) \otimes_{\ZZ} \QQ) \cap N  \quad {\rm and} \quad G_E = N^*(E)/N(E).
 \end{equation}
  $G_E$ is a finite group.
   Let $\xi_{*,E} : T_{N(E)} \to T_{N^*(E)}$ be the natural homomorphism.
 $\xi_{*,E} $ has kernel $G_E$.
 Denote the quotient $N/N^*(E)$ by $N^{\perp}(E)$.
 It is a free $\ZZ-$module and $N \cong N^*(E) \oplus N^{\perp}(E)$. Fixing a choice of this
 isomorphism (or fixing an inner product on N) we may regard $N^{\perp}(E)$ as a submodule of $N$.
  Consequently $T_N = T_{N^*(E)}\times T_{N^{\perp}(E)} $.

  Define an equivalence relation  $\sim_E$ on
 $U_E \times T_{N(E)} \times T_{N^{\perp}(E)} $ by
  $(p_1,t_1,s_1) \sim_E (p_2,t_2, s_2) $ if $p_1=p_2$, $s_1 =s_2$ and $t_{2}^{-1} t_1 \in T_{N(E,F)} $
  where $F$ is the face whose relative interior contains $p_1$.
  Let $W_E = U_E \times T_{N(E)} \times T_{N^{\perp}(E)} / \sim_E$. It is diffeomorphic to $\CC^{n-k}
  \times (\CC^*)^k$. There is a natural  map $\xi_{E}: W_E \to X_E $ induced by
  $\xi_{*,E} : T_{N(E)} \to T_{N^*(E)}$ and  the identity maps on $U_E$ and $T_{N^{\perp}(E)}$.
  $(W_E,G_E, \xi_{E} )$ is an orbifold chart on $X_E$.

  Given $E$, fix a vertex $v$ of $P$ contained in $E$. $N(v) = N(E) \oplus M$ where
  $M$ is the free submodule of $N(v)$ generated by the dicharacteristic vectors
  $\lambda_j$ such that $j \in I(v)-I(E)$. Consequently $T_{N(v)}= T_{N(E)} \times T_M$.
  We can, without loss of generality, assume that $M \subset N^{\perp}(E)$. Thus we have
  a covering homomorphism $T_M \to T_{N^{\perp}(E)}$.
  For a point $x= [p,t,s] \in X_E$, choose a small neighborhood $B$ of $s$ in $T_{N^{\perp}(E)}$
  such that $B$ lifts to $T_M$. Choose any such lift and denote it by $l: B \to T_M$.
  Let $W_x= U_E \times T_{N(E)} \times B /\sim_E$. $(W_x, G_E, \xi_E)$ is an orbifold chart on
  a neighborhood of $x$, and it is induced by $(W_E, G_E, \xi_E)$. The natural  map $W_x \hookrightarrow W_v$
  induced by the map $l$ and the identification $T_{N(v)} = T_{N(E)} \oplus T_M$, and the natural
  injective homomorphism $G_E \hookrightarrow G_v$  induce an injection (also called embedding) of
  orbifold charts
  $ (W_x, G_E, \xi_E) \to (W_v, G_v, \xi_v) $.

 %
  The existence of these injections shows
  that the charts $\{(W_v, G_v,\xi_v): v {\rm \, any \, vertex\, of\,} P \} $ are compatible
  and form part of a maximal $2n-$dimensional orbifold atlas $\mathbf{A} $ for $X$.
    We denote the pair $\{ X, \mathbf{A} \}$ by $\mathbf{X}$.
  We say that $\mathbf{X}$ is the quasitoric orbifold associated to the
  combinatorial model $(P,N,\{\lambda_i\})$.

 \begin{remark}\label{dichar}
 Note that the orbifold $\mathbf{X}$ is reduced, that is, the group in each chart has
 effective action. Also note that changing the sign of a dicharacteristic vector gives rise to
 a diffeomorphic orbifold.
\end{remark}

 Recall that for any point $x$ in an orbifold, the isotropy subgroup $G_x$ is the stabilizer
 of $x$ in some orbifold chart around $x$. It is well defined up to isomorphism. We recall the
 following definition for future reference.
 \begin{defn}\label{singpt}
 A point $x \in X $ is called a smooth
 point if $G_x$ is trivial, otherwise $x$ is called singular.
 \end{defn}
  In the case of a quasitoric orbifold $\bf X$,
 for any $x \in X$, $\pi(x)$ belongs to the relative interior of a uniquely determined face $E^x$ of $P$.
 The isotropy group $G_x = G_{E^x}$ (see \eqref{GE}). We adopt the convention that $G_P = 1$.

 \begin{defn}\label{prim}
 A quasitoric orbifold is called primitive if all its characteristic vectors are primitive.
 \end{defn}

 Note that in a primitive quasitoric orbifold the local group actions are devoid of complex
 reflections (that is maps which have $1$ as an eigenvalue with multiplicity $n-1$) and
 the classification thorem of \cite{[Pr]} for germs of complex orbifold singularities applies.

 \subsection{Axiomatic definition} Analyzing the structure of the quasitoric orbifold associated to a
 combinatorial model, we make the following axiomatic definition. This is a generalization of the
 axiomatic definition of a quasitoric manifold using the notion of locally standard action, as
 mentioned in the introduction.

  \begin{defn} A $2n$--dimensional quasitoric orbifold $\bf Y$ is an orbifold whose underlying
  topological space $Y$ has a $T_N$ action, where $N$ is a fixed free
 $\ZZ$--module of rank $n$, such that the orbit space is (homeomorphic to)
  a simple $n-$dimensional polytope $P$. Denote the projection map from $Y$ to $P$
  by $\pi: Y \to P$.
  Furthermore every point $x \in Y$ has
\begin{itemize}
\item[A1)]  a $T_N$--invariant neighborhood $ V$,
\item[A2)] an associated free  $\ZZ-$module  $M$ of rank $n$ with an
  isomorphism $\theta: T_M \to U(1)^n$ and an injective module homomorphism $\iota: M \to N$
  which induces a surjective covering homomorphism $\xi_M : T_M \to T_N $,
\item[A3)]   an orbifold chart
   $(W, G, \xi)$ over $V$ where $W$ is $\theta-$equivariantly diffeomorphic to an
    open set in $\CC^n$,  $G = {\ker} \xi_M $ and $\xi: W \to V$ is an equivariant map i.e.
   $\xi(t\cdot y)= \xi_M(t)\cdot \xi(y)$ inducing a homeomorphism between $W/G$ and $V$.
\end{itemize}
\end{defn}

It is obvious that a quasitoric orbifold defined constructively from a combinatorial model
satisfies the axiomatic definition. We now demonstrate that a quasitoric orbifold defined
axiomatically is associated to a combinatorial model. Take any facet $F$ of $P$ and let $F^0$
be its relative interior. By the characterization of local charts in A3), the isotropy group
of the $T_N$ action at any point $x$ in $\pi^{-1}(F^0)$ is a locally constant circle subgroup
of $T_N$. It is the image under $\xi_M$ of a circle subgroup of $T_M$.
 Thus it determines a locally constant vector, up to choice of sign, $\lambda$ in $N$.
 Since $\pi^{-1}(F^0)$ is connected, we get a characteristic vector $\lambda$,
 unique up to sign, for each facet of $P$. That the characteristic vectors corresponding
 to all facets of $P$ which meet at a vertex are linearly independent follows from the
 fact that their preimages under the appropriate $\iota$ form a basis of $M$. Thus we
 recover a combinatorial model $(P,N,\{\lambda_i\})$ starting from $\bf Y$.

 Let $\bf X$ be the quasitoric orbifold obtained from $(P,N,\{\lambda_i\})$ by the
 construction in the previous subsection. We need to show that $\bf X$ and $\bf Y$
 are diffeomorphic orbifolds. The hard part is to show the existence of
 $T_N$--equivariant a continuous map from $X \to Y$. This can be done following
 Lemma 1.4 of \cite{[DJ]}. The idea is to stratify $\bf Y$ according to
 {\it normal orbit type}, see Davis \cite{[Dav]}. Here we need to use the fact that the
 orbifold $\bf Y$ being reduced, is the quotient of a compact smooth manifold by
 the foliated action of a compact Lie group.
  Then one can {\it blow up} (see \cite{[Dav]}) the singular strata of $Y$ to get a
  manifold $\widehat Y$ equivariantly diffeomorphic to $T_N \times P$. One has to
  modify the arguments of Davis slightly in the orbifold case. The important thing
  is that by the differentiable slice theorem each singular stratum has a neighborhood
  diffeomorphic to its orbifold normal bundle, and is thus equipped  with a fiberwise linear
   structure so that the constructions of Davis go through. Finally there is  a collapsing
   map $\widehat{Y} \to Y$ and by composition with the above diffeomorphism a map
     $ T_N \times P \to Y $.
   It is easily checked that this map induces a continuous equivariant map $X \to Y $.

\begin{defn} Let $\bf X_1$ and $\bf X_2$ be quasitoric orbifolds whose associated
base polytope $P^n$ and free $\ZZ-$module $N$ are identical. Let $\theta $ be an
automorphism of $T_N$.
 A map ${\bf f} : {\bf X_1}\to  {\bf X_2}$ of quasitoric orbifolds is called a
$\theta-$equivariant diffeomorphism if $\bf f$ is an diffeomorphism of orbifolds and
the induced map on underlying spaces $f: X_1 \to X_2 $ satisfies
$f(t\cdot x)= \theta(t)\cdot f(x)$ for all $x \in X_1$.
\end{defn}

 Two $\theta-$equivariant diffeomorphisms $\bf f $ and $\bf g$ are said to be {\it equivalent}
 if there exists equivariant diffeomorphisms ${\bf h_i}: {\bf X_i} \to {\bf X_i}$, $i = 1,\,2$,
 such that $ {\bf g} \circ {\bf h_1} = {\bf h_2} \circ {\bf f}$.
We also define, for $\theta$ as above, the $\theta-${\it translation} of a combinatorial
 model $(P,N, \{ \lambda_i \})$ to be
the combinatorial model $(P,N, \{ \theta(\lambda_i) \}) $. The following lemma classifies
quasitoric orbifolds over a fixed polytope up to $\theta-$equivariant diffeomorphism.

\begin{lemma}\label{classi} For any automorphism $\theta$ of $T_N$, the assignment of combinatorial model
defines a bijection between equivalence classes of $\theta-$equivariant diffeomorphisms of
quasitoric orbifolds and $\theta-$translations of combinatorial models.
\end{lemma}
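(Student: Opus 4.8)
The plan is to read the combinatorial model off each orbifold by the recovery procedure used above to pass from the axiomatic to the constructive definition, and to show that the assignment $[\mathbf f:\mathbf X_1\to\mathbf X_2]\mapsto(\text{model of }\mathbf X_1,\ \text{model of }\mathbf X_2)$ is a well-defined bijection onto $\theta-$translation pairs. I will freely use the fact, established just above, that a quasitoric orbifold is determined up to equivariant diffeomorphism by its combinatorial model; fixing for each model the orbifold produced by the construction of Section \ref{QO} as a representative, every $\theta-$equivariant diffeomorphism in a class may be taken between these representatives. There are then four points to settle: the assigned pair is always a $\theta-$translation, the assignment is constant on equivalence classes, every $\theta-$translation is hit, and the class is recovered from the pair.

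To see the assigned pair is always a $\theta-$translation, note first that $f:X_1\to X_2$ descends to a homeomorphism $\bar f:P\to P$ of orbit spaces preserving the stratification of $P$ by dimension of isotropy, hence a combinatorial automorphism of $P$; relabelling facets through $\bar f$ we reduce to $\bar f=\mathrm{id}$. Equivariance yields $\mathrm{Stab}_{T_N}(f(x))=\theta(\mathrm{Stab}_{T_N}(x))$ for every $x$. Over the relative interior of a facet $F_i$ the isotropy subgroup of $\mathbf X_1$ is the circle $\img\,\xi_{N(F_i)}$ with $N(F_i)=\ZZ\lambda_i$, so applying $\theta$ identifies the isotropy circle of $\mathbf X_2$ over $F_i^0$ with $\img\,\xi_{\ZZ\theta(\lambda_i)}$; thus the characteristic vector of $\mathbf X_2$ at $F_i$ is $\pm\theta(\lambda_i)$, which by Remark \ref{dichar} we may take to be $\theta(\lambda_i)$. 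Hence the model of $\mathbf X_2$ is the $\theta-$translation of that of $\mathbf X_1$. Since the model of each orbifold is recovered from the orbifold itself, the assigned pair is unchanged when $\mathbf f$ is replaced by $\mathbf h_2\circ\mathbf f\circ\mathbf h_1^{-1}$ for equivariant self-diffeomorphisms $\mathbf h_i$, so the assignment descends to equivalence classes.

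For surjectivity I use that $\Aut(T_N)\cong\mathrm{GL}(N)$, so $\theta$ is induced by a unique $\tilde\theta\in\mathrm{GL}(N)$ and $(P,N,\{\tilde\theta(\lambda_i)\})$ is precisely the $\theta-$translation of $(P,N,\{\lambda_i\})$. Letting $\mathbf X_1,\mathbf X_2$ be the orbifolds built from these two models, the map $\hat f:P\times T_N\to P\times T_N$, $(p,t)\mapsto(p,\theta(t))$, carries $\img\,\xi_{N(F)}$ onto $\img\,\xi_{\tilde\theta(N(F))}$ for every face $F$ because $\theta$ is a group automorphism; it therefore intertwines the relation $\sim$ defining $X_1$ with the one defining $X_2$ and descends to a $\theta-$equivariant homeomorphism $f:X_1\to X_2$. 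As $\hat f$ also carries the local data $N(v),N(v,F)$ of $\mathbf X_1$ to that of $\mathbf X_2$, it sends each chart $(W_v,G_v,\xi_v)$ to the corresponding chart of $\mathbf X_2$ and so is an orbifold diffeomorphism, realizing the given $\theta-$translation.

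For injectivity, suppose $\mathbf f$ and $\mathbf g$ are assigned the same $\theta-$translation pair. Using the chosen representatives we may assume $\mathbf f,\mathbf g:\mathbf X_1\to\mathbf X_2$ share source and target. Then $\mathbf g\circ\mathbf f^{-1}:\mathbf X_2\to\mathbf X_2$ is $(\theta\circ\theta^{-1})=\mathrm{id}$-equivariant, i.e.\ an equivariant self-diffeomorphism $\mathbf h_2$, and $\mathbf g=\mathbf h_2\circ\mathbf f\circ\mathrm{id}_{\mathbf X_1}$ exhibits $\mathbf f$ and $\mathbf g$ as equivalent. I expect the main obstacle to be the well-definedness step: one must verify, at the level of orbifold charts rather than merely on the underlying space, that an orbifold diffeomorphism descends to a face-preserving map of $P$ and conjugates the \emph{full} isotropy data by $\theta$ — the circle subgroups over facets and also the finite groups $G_E$ over higher-codimension faces. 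Once the facet computation is in place, the rest is the formal bookkeeping of the orbit-space automorphism $\bar f$ and of signs via Remark \ref{dichar}.
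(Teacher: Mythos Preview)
The paper's own proof is a one--line pointer to Proposition 2.6 of \cite{[BR]} together with the remark that the section $s:P\to Y$ needed in the Buchstaber--Ray argument exists by the blow--up construction. Your write--up is essentially a fleshing out of that same argument, and the surjectivity and injectivity steps are fine as written.

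There is, however, a genuine gap in your well--definedness step, and it is exactly the one you flag at the end without closing. Over the relative interior of a facet $F_i$ the $T_N$--isotropy subgroup is the \emph{image} of $\xi_{N(F_i)}$, and this image is the circle in $T_N$ corresponding to the primitive vector $\mu_i$ in the direction of $\lambda_i$; it does not see the multiplicity $k$ when $\lambda_i = k\mu_i$. So from $\mathrm{Stab}_{T_N}(f(x))=\theta(\mathrm{Stab}_{T_N}(x))$ alone you only deduce that the characteristic vector of $\mathbf X_2$ at $F_i$ lies on the line $\QQ\cdot\theta(\lambda_i)$, not that it equals $\pm\theta(\lambda_i)$. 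This is precisely where the orbifold case differs from the manifold case in \cite{[BR]}. The fix is to use that $\mathbf f$ is an orbifold diffeomorphism, not merely a $T_N$--equivariant homeomorphism of underlying spaces: it must carry the local group $G_{F_i}=N^*(F_i)/N(F_i)\cong\ZZ/k$ of $\mathbf X_1$ isomorphically onto the local group over $F_i$ in $\mathbf X_2$, so the multiplicity is preserved. Combining the direction (from the $T_N$--stabilizer) with the multiplicity (from the orbifold isotropy) pins down the characteristic vector of $\mathbf X_2$ at $F_i$ as $\pm\theta(\lambda_i)$, and Remark \ref{dichar} disposes of the sign. Once you insert this sentence your argument is complete.

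A minor point: your reduction ``relabelling facets through $\bar f$ we reduce to $\bar f=\mathrm{id}$'' silently composes with a combinatorial automorphism of $P$, which changes the combinatorial model. This is harmless here because such a relabelling is realized by an equivariant self--diffeomorphism of $\mathbf X_1$ (hence absorbed by the equivalence relation), but you should say so.
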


\begin{proof} Proof is similar to Proposition 2.6 of \cite{[BR]}. Note that the existence of
a section $s : P \to Y$ for an axiomatically defined quasitoric orbifold $\bf Y$ follows from the
 blow up construction above.
\end{proof}

\subsection{Characteristic subspaces}\label{chars}
 Of special importance are certain $T_N$--invariant subspaces of $X$ corresponding to the faces of the polytope $P$.
 If $F$ is a face of $P$ of codimension $k$, then define $X(F) := \pi^{-1}(F)$. With subspace topology,
 $X(F)$ is a quasitoric
 orbifold of dimension $2n-2k$. Recall that  $N^{*}(F) = ( N(F) \tensor_{\ZZ} \QQ ) \cap N$ and
 $N^{\perp}(F) = N/N^*(F)$. Let $\varrho_F : N \to N^{\perp}(F)$ be the projection homomorphism.
 Let $J(F)\subset I $ be the index set of facets of $P$, other than $F$ in case $k=1$, that intersect
 $F$. Note that $J(F)$ indexes the set of facets of the $n-k$ dimensional polytope $F$. The combinatorial
 model for $X(F)$ is given by $(F, N^{\perp}(F), \{\varrho_F(\lambda_i)| i \in J(F)\})$.
 $X(F)$ is called a {\it characteristic subspace} of $X$, if $F$ is a facet of $P$.



\section{Orbifold fundamental group}\label{ofg}

A covering orbifold or orbifold cover
of an $n-$dimensional orbifold $\mathbf{Z}$ is a smooth map of orbifolds  $\mathbf{p}: \mathbf{Y} \to \mathbf{Z}$
whose associated continuous  map $p: Y \to Z$ between underlying spaces satisfies the following condition:
 Each point $z \in Z$ has a
neighborhood $U \cong V/\Gamma$ with $V$ homeomorphic to a connected open set in $\RR^n$,
for which each component
$W_i$ of $p^{-1}(U)$ is homeomorphic to $V/\Gamma_i$ for some subgroup $\Gamma_i \subset \Gamma$
such that the natural map $p_i: V/\Gamma_i \to V/\Gamma$ corresponds to the restriction of $p$ on
 $W_i$.

 Given an orbifold cover $\bf{p}: \mathbf{Y} \to \mathbf{Z}$ a diffeomorphism
 $\bf{h}: \mathbf{Y} \to \mathbf{Y}$ is called a deck transformation if $\bf{p}\circ \bf{h} =
 \bf{p}$.
 An orbifold cover $\bf{p}: \mathbf{Y} \to \mathbf{Z}$ is called a universal orbifold cover of $\mathbf{Z}$ if
 given any orbifold cover $\bf{p}_1:\mathbf{W} \to \mathbf{Z}$, there exists an orbifold cover
 $\bf{p}_2: \mathbf{Y} \to \mathbf{W} $ such that $\bf{p} = \bf{p}_1 \circ \bf{p}_2$.
  Every orbifold has a universal orbifold
 cover which is unique up to diffeomorphism, see \cite{[Th]}. 
 The corresponding group of deck transformations is called the
 orbifold fundamental group of $\mathbf{Z}$ and denoted $\pi_1^{\rm orb} (\mathbf{Z})$.

Suppose $\mathbf{Z}= [Y/G] $ where $Y$ is a manifold and $G$ is a finite group.
Then the following short exact sequence holds.
\begin{equation}
 1 \to \pi_1 (Y) \to \pi_1^{\rm orb} (\mathbf{Z}) \to G \to 1
\end{equation}

This implies that an orbifold $\mathbf{Z}$ can not be a global quotient if
$\pi_1^{\rm orb} (\mathbf{Z}) $  is trivial, unless $\mathbf{Z}$ is itself a
manifold.

 We first give a cononical construction of a quasitoric orbifold cover $\bf{O}$ for any
 given quasitoric orbifold ${\bf X}$. We will prove later that $\bf{O} $ is the universal
 orbifold cover of $\bf{X}$.

\begin{defn}\label{}
 Let  $\widehat{N}$ be  the submodule of $N$ generated by the characteristic vectors of ${\bf X}$.
 Let $\widehat{\lambda}_i$ denote the characteristic
vector $\lambda_i$ as an element of $\widehat{N}$. Let $\mathbf{O}$ be the quasitoric
orbifold associated to the combinatorial model $(P, \widehat{N}, \{ \widehat{\lambda}_i \} )$.
Denote the corresponding equivalence relation by $\widehat{\sim}$ so that the underlying topological
 space of $\mathbf{O}$ is $O = P \times T_{\widehat N}/\widehat{\sim} $. Denote the quotient map
 $P \times T_{\widehat N} \to O$ by $\widehat{\pi}$.
\end{defn}

 \begin{prop}\label{pouc}
The quasitoric orbifold ${\bf O}$ is an orbifold cover of the quasitoric orbifold ${\bf X}$
with deck group $N/\widehat{N}$.
\end{prop}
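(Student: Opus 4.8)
The plan is to build the covering map directly from the inclusion $\widehat{N}\hookrightarrow N$ and then check that, over each vertex chart, it restricts to the obvious quotient of one finite group by a subgroup. First note that the characteristic vectors at any vertex $v$ are $\ZZ$-linearly independent and $n$ in number, hence span $N_R$; so $\widehat{N}$ has rank $n$, the group $G:=N/\widehat{N}$ is finite abelian, and $\widehat{N}_R=N_R$. The inclusion $\widehat{N}\hookrightarrow N$ therefore induces a covering homomorphism of tori $\rho\colon T_{\widehat N}=N_R/\widehat{N}\to N_R/N=T_N$ with kernel $G$. I would define $\mathbf p$ from $\mathrm{id}_P\times\rho\colon P\times T_{\widehat N}\to P\times T_N$. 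To see this descends to $p\colon O\to X$, observe that for each face $F$ the submodules $\widehat{N}(F)$ and $N(F)$ are generated by the same vectors $\{\lambda_j: j\in I(F)\}$, so $\widehat{N}(F)_R=N(F)_R$; consequently $\rho$ carries the image of $\xi_{\widehat N(F)}$ in $T_{\widehat N}$ onto the image of $\xi_{N(F)}$ in $T_N$, which is exactly the compatibility of $\widehat\sim$ with $\sim$ needed for $p$ to be well defined and continuous.

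Next I would introduce the deck action. The finite group $G=\ker\rho$ sits inside $T_{\widehat N}$ and acts on $O$ as a subgroup of the torus action (translation in the fibre); since $\widehat\sim$ depends only on $s^{-1}t$, this action is well defined, and $p$ is $G$-invariant because $\rho(g\cdot t)=\rho(t)$ for $g\in G$. The induced continuous bijection $O/G\to X$ of compact Hausdorff spaces is then a homeomorphism.

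The heart of the argument is the local verification of the orbifold-cover condition, and here the \emph{key observation} is that $\widehat{N}(v)=N(v)$ for every vertex $v$ (both are the $\ZZ$-span of $\{\lambda_i:i\in I(v)\}$ inside $N_R$). Hence the manifold charts coincide: $W_v=U_v\times T_{N(v)}/\!\sim_v$ serves for both $\mathbf O$ and $\mathbf X$, while the finite groups are $\widehat{G}_v=\widehat{N}/N(v)$ for $\mathbf O$ and $G_v=N/N(v)$ for $\mathbf X$, with $\widehat{G}_v\subseteq G_v$ and $G_v/\widehat{G}_v\cong G$. Using $\rho\circ\xi_{\widehat N(v)}=\xi_{N(v)}$ one checks $p\circ\widehat\xi_v=\xi_v$, which says precisely that over $X_v\cong W_v/G_v$ the restriction of $p$ to $\widehat\pi^{-1}(U_v)\cong W_v/\widehat{G}_v$ is the natural projection $W_v/\widehat{G}_v\to W_v/G_v$, covered by $\mathrm{id}_{W_v}$ and hence a smooth orbifold map. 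Since $p^{-1}(X_v)=\widehat\pi^{-1}(U_v)$ is connected and the $X_v$ cover $X$, this establishes that $\mathbf p$ is an orbifold cover, with local model $V=W_v$, $\Gamma=G_v$, $\Gamma_i=\widehat{G}_v$.

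Finally, to pin down the deck group, I would restrict over the interior $P^\circ$, where the face is $P$ itself so $T_{N(P)}=1$ and no identifications occur: $\pi^{-1}(P^\circ)=P^\circ\times T_N$, $\widehat\pi^{-1}(P^\circ)=P^\circ\times T_{\widehat N}$, and $p=\mathrm{id}\times\rho$. There $G$ acts freely and transitively on the fibres of $p$, and any deck transformation, being continuous and commuting with $p$, is determined by its restriction to this dense set of smooth points and therefore equals translation by an element of $\ker\rho=G$. Thus the deck group is exactly $N/\widehat{N}$. The main obstacle is the bookkeeping in the local step, namely verifying that the tori, equivalence relations and finite groups match so that $p$ genuinely becomes the fibrewise quotient $W_v/\widehat{G}_v\to W_v/G_v$, together with the care needed to show the deck group is all of $G$ rather than merely containing it; once $\widehat{N}(v)=N(v)$ is exploited, the remaining checks are formal.
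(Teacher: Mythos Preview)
Your proof is correct and follows essentially the same approach as the paper: build the map from the torus covering $T_{\widehat N}\to T_N$ induced by $\widehat N\hookrightarrow N$, use the key observation $\widehat N(v)=N(v)$ to identify the chart manifolds $W_v$ for both orbifolds, and conclude that locally $p$ is the finite quotient $W_v/\widehat G_v\to W_v/G_v$ with $\widehat G_v=\widehat N/N(v)\subset G_v=N/N(v)$. Your treatment is in fact slightly more thorough than the paper's, which asserts the deck group is $N/\widehat N$ without your explicit argument over $P^\circ$ ruling out a larger deck group.
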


\begin{proof}
The inclusion $\iota: \widehat{N} \hookrightarrow N$ induces a surjective group homomorphism
$\iota_{\ast}: T_{\widehat N}= (\widehat{N}\otimes \RR)/\widehat{N} \to T_N = (N\otimes \RR)/N$
with kernel $ N/\widehat{N}$.
In fact for any face $F$ of $P$ we have commuting diagram

  \begin{equation}\label{cd1}
\begin{CD}
T_{\widehat{N}(F)} @>\xi_{\widehat{N}(F)}>> T_{\widehat N} \\
@V\iota_{_0}VV @V\iota_{\ast}VV \\
 T_{N(F)} @>\xi_{N(F)}>> T_N
\end{CD}
\end{equation}
where $\widehat{N}(F)$ is $N(F)$ viewed as a sublattice of $\widehat{N}$ and  $\iota_{_0}$ is an
 isomorphism induced by $\iota$. Thus there is an induced surjective map
 \begin{equation}\label{eqfiber}
  \iota_1: T_{\widehat N}/{\rm im}(\xi_{\widehat{N}(F)}) \to T_N/{\rm im}(\xi_{N(F)}).
  \end{equation}

  We obtain a torus equivariant
 map $f: O \to X $ defined fiberwise by \eqref{eqfiber}, that is, for any point $q\in P$
  belonging to the relative interior of the face $F$, the restriction of $f: \widehat{\pi}^{-1}(q)
 \to \pi^{-1}(q)$ matches $\iota_1$.

 The map $f$ lifts to a smooth map of orbifolds $\bf{f}: \bf{O} \to \bf{X}$. Consider orbifold charts
 on $\bf{X}$ and $\bf{O}$ corresponding to vertex $v$.
  Identifying $\widehat{N}(v)$ and
 $\widehat{N}(v,F)$ with $N(v)$ and $N(v,F)$ respectively, we note that
 $\widehat{W}_v = U_v \times T_{\widehat{N}(v)}/ \widehat{\sim_v}$ may be identified with
 $W_v= U_v \times T_{N(v)}/{\sim_v} $. Hence $O_v = W_v/\widehat{G}_v$ and
  $f: O_v \to X_v$ is given by the projection $W_v/\widehat{G}_v \to W_v/G_v $ where
 $\widehat{G}_v = \widehat{N}/N(v) $ is a subgroup of $G_v = N/N(v)$.  So  $\bf{f}: \bf{O} \to \bf{X}$
 is in fact an orbifold covering. The deck group for this covering is clearly $N/\widehat{N}$. 

\end{proof}

\begin{theorem}\label{tofg}
The quasitoric orbifold $\bf{O}$ is the orbifold universal cover of the quasitoric orbifold $\bf{X}$.
The orbifold fundamental group $\pi_1^{\rm orb}(\mathbf{X})$ of $\bf{X}$ is
isomorphic to $N/ \widehat{N}$.
\end{theorem}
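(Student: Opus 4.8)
The plan is to establish two things: first, that $\mathbf{O}$ is a \emph{universal} orbifold cover (not merely a cover, which was Proposition \ref{pouc}); and second, to identify the deck group, which was already computed to be $N/\widehat{N}$ in that proposition. The universality is the crux. The cleanest strategy is to show that $\mathbf{O}$ is itself simply connected as an orbifold, i.e. $\pi_1^{\mathrm{orb}}(\mathbf{O}) = 1$. Once that is known, any connected orbifold cover of $\mathbf{O}$ is trivial, so $\mathbf{O}$ satisfies the lifting property in the definition of universal cover: given any orbifold cover $\mathbf{p}_1: \mathbf{W} \to \mathbf{X}$, I would pull back $\mathbf{W}$ along $\mathbf{f}: \mathbf{O} \to \mathbf{X}$ and use simple-connectedness of $\mathbf{O}$ to produce the required lift $\mathbf{p}_2: \mathbf{O} \to \mathbf{W}$. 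Then the deck group of the universal cover is $\pi_1^{\mathrm{orb}}(\mathbf{X})$ by definition, and since $\mathbf{O} \to \mathbf{X}$ is already the universal cover with deck group $N/\widehat{N}$, the second assertion follows immediately.

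First I would reduce to showing $\pi_1^{\mathrm{orb}}(\mathbf{O}) = 1$. By construction $\mathbf{O}$ is the quasitoric orbifold for $(P, \widehat{N}, \{\widehat{\lambda}_i\})$, where $\widehat{N}$ is generated by the characteristic vectors; so $\widehat{N}$ equals the span of all $\widehat{\lambda}_i$, and applying the construction to $\mathbf{O}$ reproduces $\mathbf{O}$ itself (its own $\widehat{\widehat{N}}$ is $\widehat{N}$). Thus Proposition \ref{pouc} applied to $\mathbf{O}$ gives a cover with trivial deck group $\widehat{N}/\widehat{N} = 1$, which is consistent with but does not by itself prove simple-connectedness. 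To get $\pi_1^{\mathrm{orb}}(\mathbf{O}) = 1$ I would argue directly: loops in the underlying space $O$ can be contracted using the fact that $P$ is contractible (a simple polytope) and the torus fibers over the interior collapse onto faces, so the underlying space $O$ is simply connected in the ordinary sense; the orbifold loops that could survive come from the local isotropy groups $G_E$. The key point is that any orbifold loop generated by a local group element around a singular stratum is already killed because that element, coming from $G_E = N^*(E)/\widehat{N}(E)$ computed inside $\widehat{N}$, becomes nullhomotopic when the loop is pushed into a vertex chart where the relevant characteristic vectors form a basis of a full-rank sublattice.

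The main obstacle I anticipate is making the simple-connectedness argument for orbifolds rigorous, since $\pi_1^{\mathrm{orb}}$ is defined via the universal cover and deck transformations rather than via homotopy classes of loops, so one cannot simply quote the contractibility of $P$. The safe route is to invoke the characterization of universal covers directly through the lifting criterion. Concretely, I would take an arbitrary connected orbifold cover $\mathbf{p}_1: \mathbf{W} \to \mathbf{O}$ and show $\mathbf{p}_1$ is a diffeomorphism by examining it chart-by-chart over the vertex charts $\widehat{W}_v \cong W_v$, which are themselves quotients of balls in $\CC^n$ by $\widehat{G}_v = \widehat{N}/\widehat{N}(v)$; compatibility of the cover over adjacent charts together with simple-connectedness of the underlying $O$ forces $\mathbf{W} = \mathbf{O}$. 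Once $\mathbf{O}$ admits no nontrivial connected cover, universality is immediate, and combined with the deck group computation $N/\widehat{N}$ from Proposition \ref{pouc} the theorem follows. The delicate bookkeeping is ensuring that the local groups glue correctly so that no nontrivial cover can exist; this is where I expect to spend the most care.
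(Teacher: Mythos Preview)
Your strategy—prove $\pi_1^{\rm orb}(\mathbf{O})=1$ and then lift any cover of $\mathbf{X}$ via pullback along $\mathbf{f}$—is sound in outline and would yield the theorem, but the crucial step is not argued. Simple-connectedness of the underlying space $O$ together with chart-by-chart inspection does \emph{not} force an orbifold cover to be trivial: the football $S^2(p,p)$ (which is itself the quasitoric orbifold over an interval with both characteristic vectors equal to $p$) has underlying space $S^2$ but $\pi_1^{\rm orb}=\ZZ/p$. Your remark about loops ``becoming nullhomotopic when pushed into a vertex chart'' does not identify a usable mechanism, since the vertex groups $\widehat G_v=\widehat N/\widehat N(v)$ of $\mathbf{O}$ are generally nontrivial. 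What is special about $\mathbf{O}$ is that its dicharacteristic vectors generate its lattice $\widehat N$; the argument must exploit this, and yours does not yet.

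The paper takes a more direct route that supplies exactly the missing ingredient. It passes to the smooth locus: with $\Sigma$ the singular set of $\mathbf{X}$, one computes $\pi_1(X-\Sigma)=N/N(\Sigma)$ via Seifert--Van Kampen and observes that $f^{-1}(X-\Sigma)\subset O$ is an honest cover with fundamental group $K=\widehat N/N(\Sigma)$. The key lemma is that for \emph{any} orbifold cover $\mathbf{p}:\mathbf{W}\to\mathbf{X}$ one has $K\subset\pi_1(W\setminus p^{-1}(\Sigma))$: each generator $\bar\lambda_i$ of $K$ (for $i\notin I(\Sigma)$) is represented by a small meridian around the facet stratum $\pi^{-1}(F_i^\circ)$, and since the local model there is $\CC^{n-1}\times(\CC/G_{F_i})$ with $\lambda_i\in N(F_i)$ (hence trivial in $G_{F_i}=N^*(F_i)/N(F_i)$), that meridian lifts to a loop in any $\CC^{n-1}\times(\CC/G'_{F_i})$ sitting over it. Normality of $K$ in the abelian group $N/N(\Sigma)$ then gives that $f^{-1}(X-\Sigma)$ covers $W\setminus p^{-1}(\Sigma)$, so $\mathbf{O}$ covers $\mathbf{W}$. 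If you prefer your organization you can run this same meridian-lifting argument with $\mathbf{O}$ in place of $\mathbf{X}$ to obtain $\pi_1^{\rm orb}(\mathbf{O})=1$, but either way the substantive content is the facet-meridian analysis, which your sketch omits.
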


\begin{proof}
Let $\Sigma$ denote the singular loci of $\bf{X}$ (refer to definition \ref{singpt}).
 The set $\Sigma$ has real codimension at least $2$ in $X$.
 Note that $\pi(\Sigma)$ is a union of faces of $P$.
Let $P_{_\Sigma}= P - \pi(\Sigma)$.

Observe that $X - \Sigma = \pi^{-1}(P_{_\Sigma} ) = P_{_\Sigma} \times T_N /\sim $.
Since $P_{_\Sigma}$ is contractible, $\pi_1( P_{_\Sigma} \times T_N ) \cong \pi_1(T_N) \cong N $.
When we take quotient of $P_{_\Sigma} \times T_N $ by the equivalence relation $\sim$, certain elements
of this fundamental group are killed. Precisely, if $ P_{_\Sigma}$ contains a point $p$ which belongs to
the intersection of certain facets $F_{1},\ldots, F_k$ of $P$, then the elements $\lambda_1,\ldots,
\lambda_k$ of $N$ given by the corresponding characteristic vectors map to the identity element of
$\pi_1(  X - \Sigma )$. Let $I(\Sigma)$ be the collection of facets of $P$ that have nonempty intersection
with $P_{_\Sigma}$. Let $N(\Sigma)$ be the submodule generated by those $\lambda_i$ for which $i \in I(\Sigma)$.
Then the argument above suggests that $\pi_1(X-\Sigma)= N/N(\Sigma)$. Indeed, this can be established easily
by systematic use of the Seifert-Van Kampen theorem.

It is instructive to first do the proof in the case ${\bf X}$ is primitive (see Definition \ref{prim}).
Here $G_{F_i}=1$ (see \eqref{GE}) for each facet $F_i$. Hence $I(\Sigma)= I$ and $N_{\Sigma}=\widehat{N}$.
Therefore $\pi_1 (X- \Sigma) = N/\widehat{N}$. Hence by Proposition \ref{pouc},
$f_0: O - f^{-1}(\Sigma) \to X - \Sigma $, where $f_0$ is the restriction of $f$, is the universal
covering.
 Now if ${\bf p}: {\bf W} \to {\bf X}$ is any orbifold cover
then the induced map $p_0: W - p^{-1}(\Sigma) \to X - \Sigma $ is a manifold cover. Since $ p^{-1}(\Sigma)$
has real codimension at least two in $W$, $ W - p^{-1}(\Sigma)$ is connected.
By a metric completion argument it follows that $f_0 $ factors through $p_0$ and ${\bf f}$ factors through ${\bf p}$. 

For the general case we will use an argument which is similar to that of Scott \cite{[Sc]} 
for orbifold Riemann surfaces.
The underlying idea also appeared in remarks after Proposition 13.2.4 of Thurston \cite{[Th]}.

  The group $N/\widehat{N}$ is naturally a quotient of $\pi_1(X-\Sigma)= N/N(\Sigma)$
and the corresponding projection homomorphism has kernel $K =\widehat{N}/N(\Sigma)$.
 Consider the manifold covering $f_0: f^{-1}(X- \Sigma)  \to X - \Sigma $
 obtained by restricting the map $f: O \to X$.
 Note that $\pi_1(f^{-1}(X-\Sigma))=K $ and the deck group of $f_0$ is
 $N/\widehat{N} $.
 Let ${\bf W}$ be any orbifold covering of ${\bf X}$ with projection map ${\bf p}$.
 Then $W_0 = W - p^{-1}(\Sigma)$ is a
 covering of $X-\Sigma$ in the usual sense. We claim that $\pi_1(W_0) $ contains $K$ as a subgroup.

 Let $\bar{\lambda}_i$ denote the image of $\lambda_i$ in $ N/N(\Sigma)$. Obviously
 $\{ \bar{\lambda}_i, \,: \,  i \in I-I(\Sigma) \} $ generate $K$.
 Physically such a $\bar{\lambda}_i$ can be represented by
 the conjugate of a small loop $c_i$ in $X-\Sigma$ going around some point
  $x_i \in \pi^{-1}(F_i^{\circ})$ once in a plane
 transversal to $\pi^{-1}(F_i) $, where  $F_i^{\circ}$ denotes the relative interior of the facet $F_i$.
  The point $x_i$ has a neighborhood $U$ in $X$ homeomorphic to
   $\CC^{n-1} \times (\CC /G_{F_i})$. Therefore a connected component $V$ of the preimage $p^{-1}(U) \subset W$
  is homeomorphic to $ \CC^{n-1} \times (\CC/G^{\prime}_{F_i})$ where $G^{\prime}_{F_i}$ is a subgroup
  of $G_{F_i}$. We may assume, without loss of generality, that $c_i$ lies in the plane $\{0\} \times \CC/G_{F_i} $.
   By the definition of $G_{F_i}$, $\bar{\lambda}_i$ is
 trivial in $G_{F_i}$ and hence in $G^{\prime}_{F_i}$. Identifying $G_{F_i}$ with the deck group of the covering 
 $\CC^{\ast} \to \CC^{\ast}/G_{F_i}$, we infer that  $c_i$ lifts to a loop in $\CC^{\ast}$ and consequently
  in $ \CC^{\ast}/G^{\prime}_{F_i} $. Hence $c_i$ lifts
 to a loop in $V- p^{-1}(\Sigma) $. Thus each generator and therefore every element of $K$ is represented by a
 loop in $W_0$.  This induces a homomorphism $K \to \pi_1 (W_0)$. This homomorphism is injective since $K$ is a
 subgroup of the fundamental group of the space $X-{\Sigma}$ which has $W_0$ as a cover.

 For any orbifold covering ${\bf W}$ of ${\bf X}$, the associated covering $W_0$ of $X -\Sigma$ admits
 a covering by $f^{-1}(X-\Sigma) \subset O$ since $\pi_1(X-\Sigma) = K$ is a normal subgroup of $\pi_1(W_0)$.
  Thus ${\bf O}$ is an orbifold cover of ${\bf W}$. Hence ${\bf O}$ is the universal orbifold cover of ${\bf X}$
   and $N/\widehat{N}$ is the orbifold fundamental group of ${\bf X}$.
 \end{proof}

\begin{remark}\label{globalq}
Note that the orbifold fundamental group of a  quasitoric orbifold is always a finite group. It follows
that a  quasitoric orbifold is a global quotient if and only if its orbifold universal cover is a smooth
manifold. Therefore Theorem \ref{tofg} yields a rather easy method for determining if a  quasitoric orbifold
is a global quotient or not.
\end{remark}

\begin{example} If  $\widehat{N} = N$, then $\bf{X}$ is not a global quotient unless $\bf{X}$ is a
manifold. For instance, let $P$ be a $2-$dimensional simplex with characteristic vectors $(1,1), \, 
(1,-1), \, (-1,0)$ and let ${\bf X}$ be the quasitoric orbifold corresponding to this model. 
 Then $N = \widehat{N}$, but ${\bf X}$ has an orbifold singularity at $\pi^{-1}(v)$
where $v = F_1 \cap F_2$. Therefore $\bf{X}$ is not a global quotient. In fact $\bf{X}$ is
equivariantly diffeomorphic to the weighted projective space $\bf{P}(1,1,2)$.  
\end{example}

 \section{Homology with rational coefficients}\label{hom}

 Following Goresky \cite{[Gor]} one may obtain a $CW$ structure on a quasitoric
 orbifold. However it is too complicated for easy computation of homology.
  We introduce the notion of $\mathbf{q}-CW$ complex where an open cell
  is the quotient of an open disk by action of a finite group. Otherwise the
  construction mirrors the construction of usual $CW$
 complex given in Hatcher \cite{[Ha]}. We show that our $\mathbf{q} $--cellular homology of a
 $\mathbf{q}-CW$ complex is isomorphic to its singular homology  with coefficients
 in $\QQ$. We then follow the main ideas
  of the computation for the manifold case in
  \cite{[DJ]} to compute the rational homology groups of $X$.

\subsection{$\mathbf{q}$--Cellular Homology} \label{Hom}
\begin{defn}
 Let G be a finite group acting linearly, preserving orientation, on an
 $n$--dimensional disk $\bar{D}^{n}$ centered at the origin. Such an action
  preserves $S^{n-1}$.
  We call the quotient $ \bar{D}^{n}/G $ an $n$--dimensional $\mathbf{q} $--disk.
 Call $ S^{n-1}/G $ a $\mathbf{q}-$sphere. An
 $n$--dimensional $\mathbf{q}$--cell  $e^{n}_G = e^n(G)/G$ is defined to be a copy of $D^{n}/G$
 where $\bar{e}^{n}(G)$ is $G-$equivariantly homeomorphic to $\bar{D}^n$. We will denote
  the boundary of $\bar{e}^n(G)$ by $S^{n-1}$ without confusion.
\end{defn}

 Start with a discrete set $ X_{0} $ , where points are regarded as $0-$dimensional $\mathbf{q}$-cells.
 Inductively, form the $n$-dimensional $\mathbf{q}$-skeleton $ X_{n} $ from $ X_{n-1} $ by
 attaching $n$-dimensional $\mathbf{q}$-cells $ e^{n}_{G_{\alpha}}$ via continuous
 maps $ \phi_{\alpha} : S^{n-1}_{\alpha}/G_{\alpha} \to X_{n-1} $. This means that
 $ X_{n} $ is the quotient space of the disjoint union $ X_{n-1}\sqcup_{\alpha}
  \bar{e}_{G_{\alpha}}^n $ of $ X_{n-1} $ with a finite collection of $n$-dimensional
  $\mathbf{q}$-disks $ \bar{e}^{n}_{\alpha}(G_\alpha) /G_{\alpha} $ under the identification
  $ x\sim \phi_{\alpha}(x)  $  for $ x\in S^{n-1}_{\alpha}/G_{\alpha} $.

 Assume $ X=X_{n} $ for some finite $n$. The topology of $ X $ is the quotient topology
built inductively. We call a space $ X $ constructed in this way a finite $\mathbf{q}-CW$ complex.

 By Proposition 2.22 and Corollary 2.25 of \cite{[Ha]},

 \begin{equation}
  H_{p}((X_{n}, X_{n-1}); \QQ) = \bigoplus_{\alpha}
   \widetilde{H}_p \( \frac{\bar{D}_{\alpha}^n /G_{\alpha} }{S_{\alpha}^{n-1} /G_{\alpha}}; \QQ \)
  \end{equation}

 Note that
 \begin{equation}
  \widetilde{H}_p \( \frac{\bar{D}_{\alpha}^n /G_{\alpha} }{S_{\alpha}^{n-1} /G_{\alpha}}; \QQ \)
   = \left\{ \begin{array}{ll}  H_{p-1} (S_{\alpha}^{n-1} /G_{\alpha} ; \QQ )  & {\rm if} \, p\ge 2\\
                                       0 & {\rm otherwise} \end{array} \right.
 \end{equation}

 \begin{lemma}\label{qlem1}
 Let $\bar{D}^n/G$ be a $\mathbf{q}-$disk. Then $S^{n-1}/G$ is a $\QQ-$homology sphere.
 \end{lemma}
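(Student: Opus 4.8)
The plan is to reduce the computation of $H_*(S^{n-1}/G;\QQ)$ to the $G$--invariant part of $H_*(S^{n-1};\QQ)$ by means of the transfer homomorphism, and then to use the orientation--preserving hypothesis to determine the $G$--action on top homology. First I would recall the standard fact that for a finite group $G$ acting on a reasonable space $Y$ the rational homology of the orbit space is the subspace of invariants, namely $H_*(Y/G;\QQ)\cong H_*(Y;\QQ)^G$. This follows from the existence of a transfer homomorphism $\tau$ satisfying $\pi_*\circ\tau=|G|\cdot\mathrm{id}$ (see Hatcher \cite{[Ha]}): since $\QQ$ contains $1/|G|$, the quotient map $\pi:Y\to Y/G$ identifies $H_*(Y/G;\QQ)$ with the $G$--invariants in $H_*(Y;\QQ)$.

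Next I would apply this with $Y=S^{n-1}$, on which $G$ acts linearly and orthogonally (after averaging an inner product we may assume $G\subset O(n)$, so each $g\in G$ restricts to a self-homeomorphism of $S^{n-1}$). The rational homology of $S^{n-1}$ is $\QQ$ in degrees $0$ and $n-1$ and vanishes otherwise. In degree $0$ the sphere is connected, so $G$ acts trivially and the invariants are $\QQ$. In degree $n-1$ the element $g$ acts on $H_{n-1}(S^{n-1};\QQ)\cong\QQ$ by multiplication by the degree of $g|_{S^{n-1}}$, and for an orthogonal transformation this degree equals $\det g$. The orientation--preserving hypothesis forces $\det g=+1$ for every $g\in G$, so the induced $G$--action on $H_{n-1}$ is trivial and the invariants are again $\QQ$.

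Combining these observations, $H_*(S^{n-1}/G;\QQ)$ is isomorphic to $\QQ$ concentrated in degrees $0$ and $n-1$ and zero elsewhere, which is precisely the rational homology of a sphere of dimension $n-1$. Hence $S^{n-1}/G$ is a $\QQ$--homology sphere. I expect the only point requiring genuine care to be the identification of the $G$--action on top homology via the determinant; the rest is a direct application of the transfer isomorphism, which already resides in our standing reference \cite{[Ha]}.
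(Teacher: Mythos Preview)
Your argument is correct and follows the same core strategy as the paper: both reduce to the identification $H_*(S^{n-1}/G;\QQ)\cong H_*(S^{n-1};\QQ)^G$ via the transfer (this is precisely Theorem 2.4 of Bredon \cite{[Bre]}, which the paper cites). The only difference is in the last step: you compute the $G$--action on $H_{n-1}(S^{n-1};\QQ)$ directly via $\deg(g)=\det g=+1$, whereas the paper invokes rational Poincar\'e duality for the orbifold $S^{n-1}/G$ to force $H_{n-1}\cong H_0=\QQ$. Your route is slightly more elementary, since it avoids appealing to orbifold Poincar\'e duality and uses only the orientation-preserving hypothesis; the paper's route, on the other hand, sets up Poincar\'e duality for later use in the Gysin-sequence argument of section~\ref{bundle}.
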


\begin{proof} $S^{n-1}$ admits a simplicial $G-$complex structure. Apply Theorem 2.4
of Bredon \cite{[Bre]} and Poincar\'e duality for orbifolds.
\end{proof}

\begin{lemma}\label{qlem}
 If $X$ is a $\mathbf{q}-CW$ complex , then

\begin{enumerate}
\item
$$
  H_{p}((X_{n}, X_{n-1}); \QQ) =\left\{ \begin{array}{ll} 0 & \mbox{for}~  p\neq n \\
 \oplus_{_{i \in I_n}} \QQ & \mbox{for}~  p=n
   \end{array} \right.
$$
where $I_n$ is the set of $n$--dimensional $\mathbf{q}$--cells in $X$.

\item $ H_{p}(X_{n};\QQ) = 0 $ for $ p>n $. In particular,
 $ H_{p}(X;\QQ) =0 $ for $p> {\rm dim} (X)$.

\item The inclusion $i : X_{n} \hookrightarrow X $ induces an isomorphism $ i_{*} : H_{p}(X_{n};\QQ) \to H_{p}(X;\QQ) $ if $ p<n $.
\end{enumerate}
\end{lemma}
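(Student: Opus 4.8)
The plan is to follow the classical cellular-homology argument of Hatcher (Lemma 2.34), the only genuinely new input being Lemma \ref{qlem1}, which guarantees that the attaching spheres $S^{n-1}_\alpha/G_\alpha$ behave like honest spheres over $\QQ$. For part (1) I would simply combine the two displayed formulas preceding Lemma \ref{qlem1} with Lemma \ref{qlem1} itself. The first displayed formula gives
\[
H_p((X_n, X_{n-1}); \QQ) = \bigoplus_\alpha \widetilde{H}_p\Big(\tfrac{\bar{D}^n_\alpha/G_\alpha}{S^{n-1}_\alpha/G_\alpha}; \QQ\Big),
\]
and the second identifies each summand with $H_{p-1}(S^{n-1}_\alpha/G_\alpha; \QQ)$ when $p \ge 2$ and with $0$ otherwise. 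By Lemma \ref{qlem1} each $S^{n-1}_\alpha/G_\alpha$ is a rational $(n-1)$--sphere, so $H_{p-1}(S^{n-1}_\alpha/G_\alpha; \QQ)$ equals $\QQ$ precisely when $p-1 = n-1$ and vanishes for $0 < p-1 < n-1$. The restriction $p \ge 2$ removes the would-be contribution of $H_0$ at $p = 1$, so the only surviving degree is $p = n$, where we obtain $\bigoplus_{i \in I_n} \QQ$. The degenerate cases $n \le 1$, where every $G_\alpha$ is forced to be trivial, are immediate.

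Parts (2) and (3) are then purely formal consequences of part (1) via the long exact sequence of the pair $(X_k, X_{k-1})$. For part (2) I would induct on $n$: the segment $H_p(X_{n-1}) \to H_p(X_n) \to H_p(X_n, X_{n-1})$ has vanishing right-hand term for $p > n$ by part (1) (since $p \neq n$), while the inductive hypothesis gives $H_p(X_{n-1}) = 0$ for $p > n-1$; the base case $X_0$ discrete is clear, so $H_p(X_n)$ is squeezed to $0$. Finiteness of the complex, $X = X_m$ for some $m$, then yields the stated vanishing $H_p(X; \QQ) = 0$ for $p > \dim X$. For part (3), again by finiteness it suffices to show that each inclusion $X_k \hookrightarrow X_{k+1}$ with $k \ge n$ induces an isomorphism on $H_p$ for $p < n$; in the exact sequence
\[
H_{p+1}(X_{k+1}, X_k) \to H_p(X_k) \to H_p(X_{k+1}) \to H_p(X_{k+1}, X_k),
\]
both flanking relative groups vanish by part (1), because $p < n \le k$ forces $p \neq k+1$ and $p+1 \neq k+1$. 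Composing the resulting isomorphisms $H_p(X_n) \cong H_p(X_{n+1}) \cong \cdots \cong H_p(X_m) = H_p(X)$ completes the argument.

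There is no serious obstacle remaining: the entire difficulty of passing from honest cells to $\mathbf{q}$--cells has already been absorbed into Lemma \ref{qlem1}, and with rational coefficients the relative groups of each pair match those of an ordinary cellular pair. The only points requiring a moment's care are bookkeeping ones, namely ensuring the $p \ge 2$ truncation in the second displayed formula eliminates the spurious degree-one class, and checking the index inequalities $p \neq k$ and $p \neq k+1$ in the telescoping step; neither presents real difficulty.
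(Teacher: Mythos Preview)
Your proposal is correct and follows exactly the approach indicated in the paper: the paper's proof merely says that the argument is the same as Hatcher's Lemma~2.34 with Lemma~\ref{qlem1} supplying the key ingredient, and you have written out precisely that argument in detail. The bookkeeping you flag (the $p\ge 2$ truncation and the index inequalities in the telescoping step) is handled correctly.
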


\begin{proof}
Proof is similar to the proof of Lemma 2.3.4 of \cite{[Ha]}. The key ingredient is  Lemma \ref{qlem1}.
\end{proof}

 Using Lemma \ref{qlem}
  we can define $\mathbf{q}$-cellular chain complex $ (H_{p}(X_{p}, X_{p-1}), d_{p}) $
  and $\mathbf{q}$-cellular groups $ H_{p}^{\mathbf{q}-CW}(X; \QQ) $
  of $X $ in the same way as cellular chain complex  is defined in \cite{[Ha]}, page 139.

\begin{theorem}\label{cellsing}
 $ H_{p}^{\mathbf{q}-CW}(X;\QQ)\cong  H_{p}(X;\QQ) $ for all p.
\end{theorem}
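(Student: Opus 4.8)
The plan is to imitate, essentially verbatim, the proof that cellular homology agrees with singular homology (Theorem 2.35 of \cite{[Ha]}), since that argument is pure homological algebra and uses only formal properties of the relative homology of the skeleta — all of which are now supplied over $\QQ$ by Lemma \ref{qlem}. Fixing an arbitrary degree $n$, I would set up the diagram formed by the long exact sequences in singular homology (with $\QQ$ coefficients) of the consecutive pairs $(X_{n+1}, X_n)$, $(X_n, X_{n-1})$ and $(X_{n-1}, X_{n-2})$, together with the connecting maps $\partial_n \colon H_n(X_n, X_{n-1}) \to H_{n-1}(X_{n-1})$ and the maps $j_n \colon H_n(X_n) \to H_n(X_n, X_{n-1})$ induced by inclusion, recalling that the $\mathbf{q}$-cellular boundary is the composite $d_n = j_{n-1} \circ \partial_n$.

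First I would record the vanishing facts that drive the chase. By Lemma \ref{qlem}(2) one has $H_n(X_{n-1}) = 0$ and $H_{n-1}(X_{n-2}) = 0$; consequently $j_n$ and $j_{n-1}$ are injective by exactness. From the pair $(X_n, X_{n-1})$, injectivity of $j_n$ identifies $H_n(X_n)$ with $\ker \partial_n = \img j_n$, while injectivity of $j_{n-1}$ gives $\ker d_n = \ker(j_{n-1} \circ \partial_n) = \ker \partial_n$. Thus $j_n$ furnishes an isomorphism $H_n(X_n) \cong \ker d_n$.

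Next I would compute the image of the following boundary. Since $d_{n+1} = j_n \circ \partial_{n+1}$ and $j_n$ is injective, $\img d_{n+1} = j_n(\img \partial_{n+1})$, so under the isomorphism of the previous step the $\mathbf{q}$-cellular group $H_n^{\mathbf{q}-CW}(X;\QQ) = \ker d_n / \img d_{n+1}$ is carried isomorphically onto $H_n(X_n)/\img \partial_{n+1}$. Finally, the long exact sequence of the pair $(X_{n+1}, X_n)$ together with $H_n(X_{n+1}, X_n) = 0$ (Lemma \ref{qlem}(1)) exhibits $H_n(X_n)/\img \partial_{n+1}$ as $H_n(X_{n+1})$, and Lemma \ref{qlem}(3), applied to the inclusion $X_{n+1} \hookrightarrow X$ (an isomorphism on $H_p$ for $p < n+1$), identifies this with $H_n(X;\QQ)$. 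Chaining these isomorphisms gives $H_n^{\mathbf{q}-CW}(X;\QQ) \cong H_n(X;\QQ)$; since $n$ was arbitrary, the result holds in every degree.

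Because every object in sight is a $\QQ$-vector space and every step is a formal consequence of exactness, there is no serious obstacle in the chase itself. The one point that genuinely requires care — and the reason the theorem is stated only with rational coefficients — is that the inputs from Lemma \ref{qlem} rest on Lemma \ref{qlem1}, i.e.\ on the fact that a $\mathbf{q}$-sphere is merely a $\QQ$-homology sphere rather than an honest sphere. I would therefore emphasize that the whole argument is run over $\QQ$ from the outset, where the relevant homologies of the $\mathbf{q}$-cells behave exactly like those of ordinary cells, and I would not attempt to push it through with integral coefficients.
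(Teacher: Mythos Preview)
Your proposal is correct and is precisely the approach the paper intends: the paper's proof consists solely of the sentence ``Proof is similar to the proof of Theorem 2.35 of \cite{[Ha]},'' and what you have written is exactly that diagram chase, with Lemma~\ref{qlem} supplying over $\QQ$ the three inputs that Hatcher's Lemma~2.34 supplies integrally in the ordinary $CW$ case.
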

\begin{proof}
 Proof is similar to the proof of Theorem 2.35 of \cite{[Ha]}.
\end{proof}

\subsection{Rational Homology of Quasitoric Orbifolds}

 Realize $P$ as a convex polytope in $\RR^n$ and choose a linear functional $\phi: \RR^n \to \RR$ which
distinguishes the vertices of $P$, as in proof of Theorem 3.1 in \cite{[DJ]}. The vertices are linearly
ordered according to ascending value of $\phi$.
We make the 1-skeleton of $P$ into a directed
graph by orienting each edge such that $\phi$ increases along it. For each vertex of $P$ define its
index $\mathfrak{f}(v)$, as the number of incident edges that point towards $v$.

  Let $F_v$ denote the smallest face of $P$ which contains the inward pointing edges incident to $v$.
  then $\dim F_v = \mathfrak{f}(v)$ and if $F^{\prime}$ is a face of $P$ with top vertex $v$ then $F^{\prime}$
  is a face of $F_v$. Let $\widehat{F}_v$ be the union of the relative interiors of those faces $ F^{\prime}$
  of $P$, $P$ included, whose top vertex is $v$.

  For each vertex $v$ put $e_v = \pi^{-1}(\widehat{F}_v)$.
  $e_v$ is a contractible subspace of $X(F_v)$ homeomorphic
  to the quotient of an open disk  $D^{2\mathfrak{f}(v)}$
   in $\RR^{2\mathfrak{f}(v)}$ by a finite group $G(v)$
   determined by the orbifold structure on $X(F_v)$ described in subsection \ref{chars}.
  $\widehat{F}_v $ is homeomorphic to the intersection of the unit disk in
    $\RR^{\mathfrak{f}(v)}$ with $\RR^{\mathfrak{f}(v)}_+$. Since the action of the group $G(v)$ is
    obtained from a combinatorial model, see subsection \ref{chars},
     $e_v$ is a $2 \mathfrak{f}(v)-$dimensional  $\mathbf{q}-$cell.

   $X$ can be given the structure of a $\mathbf{q}-CW$ complex as follows.
   Define the $k-$skeleton $X_{2k} := \bigcup_{\mathfrak{f}(v) = k} X(F_v) $ for $0 \le k \le n $. $X_{2k+1} = X_{2k}$
   for $0 \le k \le n-1$ and $X_{2n} =X$.
   $X_{2k}$ can be obtained from $X_{2k-1}$ by attaching those $\mathbf{q}-$cells $e_v$ for which
   $ \mathfrak{f}(v)= k$.
    The attaching maps are to be described.
  Let $\bf{\sim}$ be the equivalence relation such that $X(F_v) = F_v \times T_{N^{\perp}(F_v)}/{\bf{\sim}}$.
   The $\mathbf{q}-$disk $\bar{D}^{2\mathfrak{f}(v)}/G(v)$ can be identified with
  $F_v \times T_{N^{\perp}(F_v)}/{\bf{\approx}}$ where $(p,t) \approx (q,s)$ if $p=q \in F^{\prime}$
  for some face $F^{\prime}$ whose top vertex is $v$ and $(p,t) \sim (q,s)$. The attaching map
  $\phi_v: S^{2\mathfrak{f}(v) -1}/G(v) \to X_{2\mathfrak{f}(v) -1}  $ is the
  natural quotient map from  $(F_v - \widehat{F}_v) \times T_{N^{\perp}(F_v)}/{\approx} \to
   (F_v - \widehat{F}_v) \times T_{N^{\perp}(F_v)}/{\sim}$.

  $X $ is a $\mathbf{q}-CW$ complex with no odd dimensional cells and with $ \mathfrak{f}^{-1}(k) $ number of
  $2k-$dimensional $\mathbf{q}$-cells. Hence by $\mathbf{q}-$cellular homology theory

\begin{equation}
H_{p}^{ \mathbf{q}-CW}(X;\QQ)= \left\{ \begin{array}{ll} \bigoplus_{_{_{\mathfrak{f}^{-1}(p/2)}}}
\mathbb{Q} & \mbox{ if}~  p  \leq n ~ \mbox{and} ~ p~ \mbox{is even} \\
 0 & \mbox{otherwise}
\end{array} \right.
\end{equation}

Hence by Theorem \ref{cellsing}
\begin{equation}
 H_{p}(X;\QQ) = \left\{ \begin{array}{ll} \bigoplus_{_{ \mathfrak{f}^{-1}(p/2)}}\mathbb{Q} & \mbox{if} ~ p \leq n~ \mbox{and}~ p ~\mbox{is even}\\
 0 & \mbox{otherwise}
\end{array} \right.
\end{equation}

\section{Cohomology ring of quasitoric orbifolds}\label{cohom}

 Again we will modify some technical details but retain the broad framework of the argument in
 \cite{[DJ]} to get the anticipated answer. All homology and cohomology modules in this section
 will have coefficients in $\QQ$.

\subsection{Gysin sequence for $\mathbf{q}-$sphere bundle}\label{bundle}

 Let $ \rho : E \to B $ be a rank $n$ vector bundle with paracompact base space $B$.
  Restricting $ \rho $ to the space $ E_{0}$ of  nonzero vectors  in $E$, we obtain
   an associated projection map
 $\rho_{0} : E_{0}\to B $. Fix a finite group $G$ and a representation of $G$ on $\RR^n$. Such
 a representation induces a fiberwise linear action of $G$ on $E$ and $E_0$.

  Consider the two fiber bundles
   $\rho^{G} : E/G \to B$ and $\rho^{G}_{0} : E_{0}/G \to B$. There exist
  natural fiber bundle maps $ f_{1}: E\to E/G$ and $ f_{2}: E_{0} \to E_{0}/G$.
  These induce isomorphisms
   $f_{1}^{*} : H^{p}(E/G) \to H^{p}(E)$ and $f_{2}^{\ast} : H^{p}(E_{0}/G) \to H^{p}(E_0)$
   for each $p$. The second isomorphism is obtained by applying Theorem 2.4
   of \cite{[Bre]} fiberwise and then using Kunneth formula, Mayer-Vietoris sequence and
    a direct limit argument as in the proof of Thom isomorphism in \cite{[MS]}.
     The commuting diagram

 \[
\begin{CD}
E_{0} @>i_{1}>> E @>j_{1}>> (E,E_{0})\\
@Vf_{2}VV @Vf_{1}VV @VVf_{3}V\\
E_{0}/G @>i_{2}>> E/G @>j_{2}>> (E/G,E_{0}/G)
\end{CD}
\]
 induces a commuting diagram of two exact rows
$$
 \begin{CD}
\cdots \to H^{p-1}(E_{0}) @>\delta_{1}^{*}>>  H^{p}(E,E_{0}) @>j_{1}^{*}>> H^{p}(E) @>i_{1}^{*}>> H^{p}(E_{0})  \to \cdots \\
@Af_{2}^{*}AA @Af_{3}^{*}AA  @AAf_{1}^{*}A @AAf_{2}^{*}A \\
\cdots \to H^{p-1}(E_{0}/G)  @>\delta_{2}^{*}>>  H^{p}(E/G,E_{0}/G) @>j_{2}^{*}>> H^{p}(E/G) @>i_{2}^{*}>>  H^{p}(E_{0}/G) \to \cdots
\end{CD}
$$
 By the five lemma $ f_{3}^{*} $ is an isomorphism. Using the Thom isomorphism
 $ \cup u : H^{p-n} (E) \to H^{p}(E,E_{0}) $
 we get the isomorphism $\cup u_{G} : H^{p-n}(E/G) \to H^{p}(E/G, E_{0}/G) $  where $ \cup u_{G} = {f_{3}^{*}}^{-1}\circ \cup u \circ f_{1}^{*} $.
 Substituting the isomorphic module $ H^{p-n}(E/G) $ in place of $ H^{p}(E/G, E_{0}/G)$ in the second row of the
 above diagram, we obtain an exact sequence
$$
 \begin{CD}
\cdots \to H^{p-n}(E/G) @>g>> H^{p}(E/G) \to H^{p}(E_{0}/G) \to H^{p-n+1}(E/G) \to \cdots
\end{CD}
$$
 where $ g = j_2^{*} \circ \cup u_{G}$.
  The pull back of cohomology class
 $  u_{G}|(E/G) $ in $ H^{n}(B) $ by the zero section of $\rho^G$ will be called the Euler class $e$ of
  $ \rho^G $. Now substitute the isomorphic cohomology ring $ H^{*}(B) $ in place of  $ H^{*}(E/G) $
  in the above sequence.
  This yields the  Gysin exact sequence for the $\mathbf{q}-$sphere bundle
 $\rho^{G}_0 : E_0/G \to B$
\begin{equation}
 \begin{CD}
\cdots \to H^{p-n}(B;\QQ) @>\cup e>> H^{p}(B;\QQ) \to H^{p}(E_{0}/G;\QQ) \to H^{p-n+1}(B;\QQ) \to \cdots
\end{CD}
\end{equation}

\begin{remark}\label{euler}
Euler classes of $\rho : E \to B$ and $\rho^{G} : E/G \to B$ are the same since $f_1^{*}$ is an isomorphism.
\end{remark}

\subsection{A Borel construction}\label{local}

 Let $ K $ be the simplicial complex associated to the  boundary of the dual polytope of $ P $. Then $ P $ is the cone on the barycentric subdivision of $ K $.  $P$ can be split into cubes $P_{\sigma}$ where $\sigma$ varies over
  $(n-1)$--dimensional faces of $K$. These correspond bijectively to vertices of $P$. We regard the $k$--cube as the orbit space of standard $k-$dimensional torus action on the $2k$--disk
\begin{equation}
 \bar D^{2k}=\{(z_{1}, \cdots , z_{k})\in \mathbb{C}^{k} : \lvert z_{i} \rvert \leq 1\}
\end{equation}

 Define $ BP_{\sigma} = ET_{N}\times_{T_{N}}((P_{\sigma}\times T_{N})/\sim) \cong  ET_{N}\times_{T_{N}}(\bar{D}^{2n}/G_{\sigma}) $,
  where $G_{\sigma} = G_{v_{\sigma}}$, $v_{\sigma}$ being the vertex in $ P $ dual to $ \sigma $.
If $ \sigma_{1}$ is another $(n-1)$ simplex in $K$ such that $ \sigma\cap\sigma_{1} $ is an $(n-2)$ simplex
 then $ BP_{\sigma} $ and $ BP_{\sigma_{1}} $ are glued along the common part of the boundaries of $ P_{\sigma} $
 and $ P_{\sigma_{1}} $. In this way $ BP_{\sigma} $ fit together to yield $ BP = ET_{N}\times_{T_{N}}X$. Let $\mathfrak{p} : BP\rightarrow BT_{N}$ be the Borel map which is a fibration with fiber $X$. The fibration $ \mathfrak{p}: BP \to BT_{N} $ induces a homomorphism $ \mathfrak{p}^{*}:H^{*}(BT_{N};\QQ) \to H^{*}(BP;\QQ) $.

 The face ring or Stanley-Reisner ring $SR(P)$ of a polytope $P$ over $\QQ$ is the quotient of the ring $\QQ[w_1, \ldots, w_m]$,
  where the variables $w_i$ correspond to the facets of $P$,
 by the ideal $\mathcal{I}$ generated by all monomials $w_{i_{1}}\cdots w_{i_{k}}$ such that the corresponding intersection of facets
 $F_{i_{1}} \cap \ldots \cap F_{i_{k}} $ is empty. The face ring is graded by declaring the degree of each $w_i$ to be $2$.
   The following result resembles Theorem 4.8 of \cite{[DJ]}.

\begin{theorem}\label{tbp}
  Let $P$ be an $n-$polytope and $SR(P)$ be the face ring of $P$ with coefficients in $\QQ$. The map $ \mathfrak{p}^{*}: H^{*}(BT_{N};\QQ) \to H^{*}(BP;\QQ) $ is surjective and induces an isomorphism of graded rings $H^{*}(BP;\QQ) \cong SR(P)$.
\end{theorem}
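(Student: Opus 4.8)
The plan is to follow the scheme of Theorem 4.8 of \cite{[DJ]}, covering $BP$ by the Borel constructions over the vertex stars and gluing by Mayer--Vietoris, while using the rational machinery of Subsection \ref{bundle} to render every finite group invisible. For each vertex $v$ let $U_v$ be its open star in $P$ and set $BX_v = ET_N \times_{T_N} \pi^{-1}(U_v)$; as $v$ ranges over the vertices these cover $BP$, and their intersection pattern is governed by the face poset of $P$, a nonempty intersection occurring exactly over the open star of a common face $E$. The key local input is that $\pi^{-1}(U_v) = W_v/G_v$ is a contractible cone, so by Remark \ref{euler} and the rational isomorphisms $f_1^{*}, f_2^{*}$ of Subsection \ref{bundle} the quotient by the finite group $G_v$ does not change rational cohomology; passing to the manifold cover $W_v$ with its standard $T_{N(v)}\cong U(1)^n$ action gives $H^*(BX_v;\QQ)\cong H^*(BT_{N(v)};\QQ)\cong \QQ[u_j : F_j\ni v]$, a polynomial ring on the $n$ facets through $v$. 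The same computation over a face $E$ yields $\QQ[u_j : F_j\supseteq E]$, again with $G_E$ suppressed rationally.

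Next I would construct the generators. For each facet $F_i$ the characteristic suborbifold $X(F_i)=\pi^{-1}(F_i)$ of Subsection \ref{chars} has real codimension two, and its Borel construction $ET_N\times_{T_N}X(F_i)$ sits inside $BP$ with a rank-two orbifold normal bundle; the Thom/Euler-class construction of Subsection \ref{bundle}, well defined over $\QQ$ by Remark \ref{euler}, produces a class $w_i\in H^2(BP;\QQ)$. Because $\pi(X(F_i))=F_i$, any facets with $F_{i_1}\cap\cdots\cap F_{i_k}=\emptyset$ satisfy $X(F_{i_1})\cap\cdots\cap X(F_{i_k})=\emptyset$, so the product $w_{i_1}\cdots w_{i_k}$ vanishes; hence $w_i\mapsto w_i$ descends to a graded ring homomorphism $\psi:SR(P)\to H^*(BP;\QQ)$. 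Restricting to $BX_v$, the local computation shows $w_i|_{BX_v}=0$ when $F_i\not\ni v$ and that $\{w_i|_{BX_v}:F_i\ni v\}$ is the free polynomial generating set of $H^*(BX_v;\QQ)$; in particular the $w_i$ generate $H^*(BP;\QQ)$, which establishes the surjectivity asserted in the theorem.

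To prove that $\psi$ is an isomorphism I would run the Mayer--Vietoris spectral sequence of the cover $\{BX_v\}$. Since every piece and every finite intersection has cohomology concentrated in even degrees, the spectral sequence collapses and $H^*(BP;\QQ)$ is identified with the inverse limit over the face poset of the system $\{\QQ[u_j:F_j\supseteq E]\}$ equipped with the evident restriction maps. On the combinatorial side the face ring carries exactly the same description: since $SR(P)$ is reduced, the total restriction $SR(P)\hookrightarrow\bigoplus_v \QQ[u_j:F_j\ni v]$ is injective, and its image is the set of families compatible over common subfaces. Identifying the two inverse limits, and checking that $\psi$ realizes this identification multiplicatively, gives $H^*(BP;\QQ)\cong SR(P)$ as graded rings.

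As an independent check and clean closing argument I would compute the Serre spectral sequence of $X\hookrightarrow BP\xrightarrow{\mathfrak{p}}BT_N$: both $H^*(BT_N;\QQ)$ and, by Section \ref{hom}, $H^*(X;\QQ)$ are concentrated in even degrees, so it collapses and $\mathrm{Ps}(H^*(BP);s)=(1-s^2)^{-n}\sum_k |\mathfrak{f}^{-1}(k)|\,s^{2k}$. As the numbers $h_k := |\mathfrak{f}^{-1}(k)|$ are the $h$-numbers of $P$, this equals the Hilbert series $(1-s^2)^{-n}\sum_k h_k s^{2k}$ of $SR(P)$; combined with the surjectivity of $\psi$, a degreewise dimension count then forces $\psi$ to be an isomorphism. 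The main obstacle is the gluing step: one must verify that the Mayer--Vietoris system degenerates with no higher obstructions and that its inverse limit matches the face-ring inverse limit exactly, including the ring structure. This is precisely where the simplicity of $P$ and the rational suppression of the groups $G_E$ via Subsection \ref{bundle} are essential.
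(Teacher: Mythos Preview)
Your proposal is correct, and it is recognizably in the Davis--Januszkiewicz spirit, but the route differs from the paper's in several concrete respects.

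The paper works with the cubical decomposition of $P$ indexed by the $(n-1)$--simplices $\sigma$ of the dual complex $K$, and its central local computation is \emph{not} the contractibility of the vertex piece but the Gysin sequence of the $\mathbf{q}$--sphere bundle $BP_{\partial\sigma}\to BT_N$ developed in Subsection \ref{bundle}: this gives directly $H^*(BP_{\partial\sigma};\QQ)\cong\QQ[w_1,\ldots,w_n]/(w_1\cdots w_n)$, the face ring of the boundary of a simplex. With this in hand the paper does an induction on $\dim K$, using Mayer--Vietoris one simplex at a time. In particular the generators $w_i$ arise there as the first Chern classes of the line-bundle summands of the disk bundle over $BT_N$, not as Thom classes of the characteristic suborbifolds; and the argument is self-contained, not invoking the rational homology of $X$ from Section \ref{hom}.

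Your approach, by contrast, uses only the trivial local input $H^*(BX_v;\QQ)\cong H^*(BT_N;\QQ)$ coming from contractibility of $\pi^{-1}(U_v)$, constructs the $w_i$ geometrically as Euler classes of the codimension-two suborbifolds, and then closes either by identifying $H^*(BP;\QQ)$ with the inverse limit $\varprojlim_E \QQ[u_j:F_j\supseteq E]$ and matching it with the known sheaf-theoretic description of $SR(P)$, or by the Hilbert-series count using the $h$-vector and the computation of $H_*(X;\QQ)$ from Section \ref{hom}. The Hilbert-series shortcut is a genuine simplification over the paper's induction once Section \ref{hom} is available, though it makes the proof of Theorem \ref{tbp} logically dependent on that section; the paper's ordering keeps Theorem \ref{tbp} independent and uses the Gysin machinery of Subsection \ref{bundle} in an essential way rather than merely to justify ignoring finite isotropy. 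Your inverse-limit route is also fine but you should be explicit that the restriction maps in your \v{C}ech system are the obvious quotients $\QQ[u_j:F_j\supseteq E]\to\QQ[u_j:F_j\supseteq E']$ for $E\subset E'$, so that the limit is literally the ring of compatible families, and that the identification of your geometric $w_i$ with the polynomial generators on each $BX_v$ holds on the nose (this follows from the local model $W_v\cong\CC^n$ where $X(F_i)\cap X_v$ is a coordinate hyperplane).
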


\begin{proof}
  Suppose $ \sigma $ is an $(n-1)$--simplex in $K$ with vertices $ w_{1},\ldots ,w_{n} $. Note that there is a one-to-one correspondence between facets of $P$ meeting at $v_{\sigma}$ and vertices of $\sigma$.
   Let $ P_{\sigma} $ be the corresponding $n-$cube in $P$.
   Then $ BP_{\sigma}= ET_{N}\times_{T_{N}}(\bar{D}^{2n}/G_{\sigma})$ is a $\bar{D}^{2n}/G_{\sigma}$
    fiber bundle over $BT_{N}$.
    Hence $ ET_{N}\times_{T_{N}}(S^{2n-1}/G_{\sigma}) \to  BT_{N}$ give the associated $\mathbf{q}-$sphere bundle
    $\mathfrak{p}_{\sigma} : BP_{\partial\sigma}  \to BT_{N}$. Also consider the
  disk bundle $\mathfrak{r} :  ET_{N}\times_{T_{N}} \bar{D}^{2n} \to BT_{N}$.
   It is bundle homotopic to the complex vector
  bundle $\mathfrak{r}^{\prime} : ET_{N}\times_{T_{N}}\CC^{n} \to BT_{N}$.
   Since $T_{N}$ acts diagonally on $\CC^n$, the last bundle
  is the sum of line bundles $\mathcal{L}_{1}\oplus \cdots \oplus \mathcal{L}_{n}$ where
  $\mathcal{L}_j$ corresponds to $j-$th coordinate
  direction in $\CC^n$ and hence to $w_j$. Without confusion, we set
    $c_{1}(\mathcal{L}_{i}) = w_{i} \in H^{2}(BT_{N}; \QQ)$.
  By the Whitney product formula $c_{n}(\mathfrak{r}^{\prime}) = w_{1} \cdots w_{n}$.
   Hence from section \ref{bundle} the Euler class of the $\mathbf{q}-$sphere bundle $\mathfrak{p}_{\sigma}$ is $e = w_{1} \cdots w_{n} $.

 Now consider the Gysin exact sequence for $\mathbf{q}-$sphere bundles

\[
\begin{CD}
\cdots \to H^{*}(BP_{\partial\sigma}) \to H^{*}(BT_{N}) @>\cup e>>  H^{*+2n}(BT_{N}) @>\mathfrak{p}_{\sigma}^{*}>>  H^{*+2n}(BP_{\partial\sigma}) \to H^{*+2n}(BT_{N}) \to \cdots
\end{CD}
\]
Since the map $\cup e$ is injective, by exactness $\mathfrak{p}_{\sigma}^{*}$ is surjective and we get the following diagram
\begin{equation}\label{equ}
\begin{CD}
0 \to H^{*}(BT_{N}) @>\cup {e}>> H^{*+2n}(BT_{N}) @>\mathfrak{p}_{\sigma}^{*}>>   H^{*+2n}(BP_{\partial \sigma}) \to 0 \\
@V{id}VV  @V{id}VV   @. \\
\QQ[w_{1},\cdots ,w_{n}] @>w_{1}\ldots w_{n}>>  \QQ[w_{1},\ldots ,w_{n}]
\end{CD}
\end{equation}

  Hence from diagram (\ref{equ}) $H^{*}(BP_{\partial\sigma})= \QQ[w_{1},\cdots,w_{n}]/(w_{1}\ldots w_{n})$.
 Since  $\bar{D}^{2n}/G_{\sigma}$ is contractible,
  $H^{*}(BP_{\sigma};\QQ)= H^{*}(BT_{N};\QQ)= \QQ[w_{1},\cdots,w_{n}]$.
Using induction on the dimension of $K$ and an application of the
Mayer-Vietoris sequence we get the conclusion of the Theorem.
\end{proof}

Consider the Serre spectral sequence of the fibration $\mathfrak{p} : BP \to BT_{N} $ with fiber $X$. It has $E_{2}$-term
$E_{2}^{p,q} = H^{p}(BT_{N}; H^{q}(X))= H^p(BT_{N})\otimes H^q(X)$. Using the formula for Poincar\'e series of $X$
it can be proved that this spectral sequence degenerates, $E_{2}^{p,q} = E^{p,q}_{\infty}$ (see Theorem 4.12 of \cite{[DJ]}).
Let $ j : X \to BP $ be inclusion of the fiber. Then $ j^{*} : H^{*}(BP) \to H^{*}(X) $ is surjective
(see Corollary 4.13 of \cite{[DJ]}).

 We have natural identifications $H_{2}(BP) = \QQ^{m}$ and $H_{2}(BT_{N}) = \QQ^{n}$.
 Here $\QQ^{m}$ is regarded as the  $\QQ$ vector space with basis corresponding to the set of codimension
 one faces of $P$.
 $\mathfrak{p}_{*}: H_{2}(BP) \to H_{2}(BT_{N})$ is naturally identified with the characteristic map
 $\Lambda : \QQ^{m} \to \QQ^{n}$ that sends $w_i$, the $i$th standard basis vector of $\QQ^m$, to $\lambda_i$.
 The map $\mathfrak{p}^{*}: H^{2}(BT_{N}) \to H^{2}(BP)$ is then identified with the dual map
 $ \Lambda^{*}: (\QQ^{n})^{*} \to (\QQ^{m})^{*}$. Regarding the map $\Lambda $ as an $n\times m$ matrix $\lambda_{ij}$,
  the matrix for $\Lambda^{*}$ is the transpose. Column vectors of $\Lambda^{*}$ can then be regarded as linear combinations of $w_{1}, \ldots , w_{m}$.  Define
 \begin{equation}
\lambda^{i}=\lambda_{i1} w_{1}+ \cdots + \lambda_{im} w_{m}.
\end{equation}

We have a short exact sequence
 \[
\begin{CD}
0 \to H^{2}(BT_{N}) @>{\mathfrak{p}^{*}}>> H^{2}(BP) @>{j^{*}}>> H^{2}(X) \to 0\\
 @|  @| \\
 (\QQ^{n})^{*} @>{\Lambda^{*}}>> (\QQ^{m})^{*}.
\end{CD}
\]

Let $\mathcal J$ be the homogeneous ideal in $\QQ[w_{1}, \ldots, w_{m}]$ generated by the $\lambda^{i}$ and let
 $\bar{\mathcal J}$ be its image in the face ring $SR(P)$. Since $j^{*}: SR(P) \to H^{*}(X) $ is onto and
$\bar{\mathcal J}$ is in its kernel, $j^{*}$ induces a surjection $SR(P)/\bar{\mathcal J} \to H^{*}(X) $.

\begin{theorem}\label{cohomring}
Let $\mathbf{X}$ be the quasitoric orbifold associated to the
  combinatorial model $(P,N,\{\lambda_i\})$. Then $H^{*}(X; \QQ)$ is the quotient of the face ring of $P$ by
  $\bar{\mathcal J}$; i.e.,
 $H^{*}(X; \QQ) = \QQ[w_{1}, \ldots ,w_{m}]/ ( \mathcal{I} + \mathcal{J} )$.
\end{theorem}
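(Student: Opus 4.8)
The plan is to upgrade the surjection $SR(P)/\overline{\mathcal J}\twoheadrightarrow H^{*}(X)$ already obtained from $j^{*}$ into an isomorphism by a Poincar\'e series count. Since this is a degree-preserving surjection of non-negatively graded $\QQ$-vector spaces that are finite-dimensional in each degree, it suffices to show that both sides have the same Poincar\'e series; equality of dimensions in every degree then forces injectivity, and because the map is the ring homomorphism induced by $j^{*}$, the resulting bijection is automatically a graded ring isomorphism.

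First I would pin down the Poincar\'e series of $H^{*}(X)$. By the computation of Section \ref{hom} the rational cohomology is concentrated in even degrees with $\dim_{\QQ} H^{2k}(X;\QQ)=\#\mathfrak{f}^{-1}(k)$, the number of vertices of index $k$. I would then invoke the standard combinatorial fact that, for a generic linear functional on the simple polytope $P$, the number of vertices of index $k$ equals the $k$-th entry $h_{k}$ of the $h$-vector of $P$. Hence the Poincar\'e series of $H^{*}(X)$ is $\sum_{k} h_{k}\,t^{2k}$.

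Next I would compute the Poincar\'e series of $SR(P)/\overline{\mathcal J}$, and here the cleanest route uses machinery already in place. The Serre spectral sequence of $\mathfrak{p}\colon BP\to BT_{N}$ degenerates and $j^{*}$ is surjective, so by the Leray--Hirsch principle $H^{*}(BP)$ is a free module over $H^{*}(BT_{N})=\QQ[v_{1},\dots,v_{n}]$ on classes lifting any $\QQ$-basis of the fiber cohomology $H^{*}(X)$. Under the identification $H^{*}(BP)\cong SR(P)$ of Theorem \ref{tbp} together with $\mathfrak{p}^{*}(v_{i})=\lambda^{i}$, the augmentation ideal $(v_{1},\dots,v_{n})$ generates exactly $\overline{\mathcal J}$, so reducing this free module modulo the augmentation yields $SR(P)/\overline{\mathcal J}$ on one side and $H^{*}(BP)\otimes_{H^{*}(BT_{N})}\QQ\cong H^{*}(X)$ on the other. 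Thus the two Poincar\'e series agree, both equal to $\sum_{k} h_{k} t^{2k}$. An equivalent, purely algebraic derivation replaces Leray--Hirsch by the regular-sequence property: $SR(P)$ is Cohen--Macaulay because $P$ is simple, and once $\lambda^{1},\dots,\lambda^{n}$ is known to be a homogeneous system of parameters it is automatically a regular sequence, so quotienting by it multiplies the Hilbert series of $SR(P)$ by $(1-t^{2})^{n}$, again producing $\sum_{k} h_{k} t^{2k}$.

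The step I expect to be the technical heart is verifying that $\lambda^{1},\dots,\lambda^{n}$ is a homogeneous system of parameters in $SR(P)$, i.e.\ that $SR(P)/\overline{\mathcal J}$ is finite-dimensional over $\QQ$. By the local criterion this reduces to a vertex-by-vertex check: for each vertex $v$ with $I(v)=\{i_{1},\dots,i_{n}\}$, setting $w_{j}=0$ for $j\notin I(v)$ must send $\lambda^{1},\dots,\lambda^{n}$ to linearly independent forms in $\QQ[w_{i_{1}},\dots,w_{i_{n}}]$. The coefficient matrix of these restricted forms is precisely $[\lambda_{i_{1}}\,|\,\cdots\,|\,\lambda_{i_{n}}]$, the matrix of characteristic vectors at $v$, which is nonsingular over $\QQ$ exactly because the defining hypothesis of the combinatorial model makes the $\lambda_{i}$, $i\in I(v)$, $\ZZ$-linearly independent. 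This is where working over $\QQ$ is essential and where the orbifold generalization enters: the manifold argument of \cite{[DJ]} requires this matrix to be unimodular, whereas here mere $\QQ$-independence suffices to run the same count rationally. Matching dimensions then upgrades the surjection to the claimed isomorphism $SR(P)/\overline{\mathcal J}=\QQ[w_{1},\dots,w_{m}]/(\mathcal I+\mathcal J)\xrightarrow{\ \cong\ }H^{*}(X;\QQ)$.
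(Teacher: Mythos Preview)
Your primary route---using the degeneration of the Serre spectral sequence (equivalently, Leray--Hirsch freeness of $H^{*}(BP)$ over $H^{*}(BT_{N})$) to identify $SR(P)/\overline{\mathcal J}=H^{*}(BP)\otimes_{H^{*}(BT_{N})}\QQ$ with $H^{*}(X)$ via $j^{*}$---is exactly the paper's proof, which records it in one line as ``since the spectral sequence degenerates, $H^{*}(BP)\simeq H^{*}(BT_{N})\otimes H^{*}(X)$.'' The $h$-vector computation you spell out is not needed for this route, since freeness already matches the Poincar\'e series directly. Your second, purely algebraic route (Cohen--Macaulayness of $SR(P)$ plus the vertexwise nonsingularity check making $\lambda^{1},\dots,\lambda^{n}$ a linear system of parameters) is not in the paper; it is the classical Stanley argument and has the virtue of bypassing the spectral sequence entirely, at the cost of needing the Section~\ref{hom} identification $\dim H^{2k}(X)=h_{k}$ as a separate input.
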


\begin{proof}
We know that $H^{*}(BT_{N})$ is a polynomial ring on $n$ generators, and $H^{*}(BP)$ is the face ring.
 Since the spectral sequence degenerates, $H^{*}(BP)\simeq H^{*}(BT_{N})\otimes H^{*}(X)$.
 Furthermore, $\mathfrak{p}^{*} : H^{*}(BT_{N}) \to H^{*}(BP)$ is injective and $\bar{\mathcal J}$ is identified
  with the image of $\mathfrak{p}^{*}$. Thus $H^{*}(X)=H^{*}(BP)/\bar{\mathcal J}  =
   \QQ[w_{1}, \ldots ,w_{m}]/(\mathcal{I}+\mathcal{J})$.
\end{proof}

\section{Stable almost complex structure}\label{almostc}

 Buchstaber and Ray \cite{[BR]} have shown the existence of a stable almost complex
 structure on omnioriented quasitoric manifolds. We generalize their result to
 omnioriented quasitoric orbifolds (see section \ref{intro} for definition).
  Let $m$ be the cardinality of $I$, the set of facets of the polytope $P$.
  We will realize the orbifold $\bf{X}$ as the quotient
 of an open set of $\CC^m$. Consider
 the natural combinatorial model $( \RR^{m}_+ , L \cong \ZZ^m, \{e_i \} )$ for $\CC^m$.
 Let $\pi_s: \CC^m \to \RR^{m}_+ $ be the projection map corresponding to taking modulus
 coordinatewise.
 Embed the polytope $P$ in $\RR^{m}_+$ by the map $d_{\mathcal F}: P \to \RR^m $ where the $i$th coordinate of
 $d_{\mathcal F}(p)$
  is the Euclidean distance $(d(p,F_i))$ of $p$ from the hyperplane of the $i$th facet $F_i$ in
  $\RR^n$. Consider the thickening $W^{\RR}(P) \subset \RR^{m}_+$ of $d_{\mathcal F}(P)$, defined by
  \begin{equation}
  W^{\RR}(P)= \{ f:I \to \RR_+ | f^{-1} (0) \in \mathfrak{L}_F(P)  \}
 \end{equation}
where $\mathfrak{L}_F(P)$ denotes the face lattice of $P$.

Denote the $n$--dimensional linear subspace of $\RR^m$ parallel to $d_{\mathcal F}(P)$ by $V_P$ and
its orthogonal complement by $V_P^{\perp}$. As a manifold with corners, $W^{\RR}(P)$ is canonically
diffeomorphic to the cartesian product $d_{\mathcal F}(P) \times \exp(V_P^{\perp})$ (see \cite{[BR]}, Proposition 3.4).

 Define the spaces $W(P)$ and ${\mathcal Z}(P)$ as follows.
 \begin{equation}\label{defwp}
 W(P):= \pi_s^{-1}(W^{\RR}(P)), \quad \quad  {\mathcal Z}(P) :=
\pi_s^{-1}(d_{\mathcal F}(P) ).
 \end{equation}
  $W(P)$ is an open subset of $\CC^m$ and there is a canonical diffeomorphism
   \begin{equation}\label{wpdiffeo}
   W(P) \cong {\mathcal Z}(P) \times \exp(V_P^{\perp})
   \end{equation}

   Let $\Lambda : L  \to N  $ be the map of $\mathbb{Z}-$modules
 which maps the standard generator $e_i$ of $L$ to the dicharacteristic vector $\lambda_i$.
 Let $K$ denote the kernel of this map.  Recall the submodule $ \widehat{N}$ of $N$ generated
 by the dicharacteristic vectors and the orbifold universal cover $\bf O$ from section \ref{ofg}.
  Since
 the $\mathbb{Z}-$modules
 $L$ and $\widehat{N}$ are free, the sequence
 \begin{equation}\label{split}
 0 \longrightarrow K \longrightarrow L \stackrel{\Lambda}{\longrightarrow} \widehat{N} \longrightarrow 0
 \end{equation}
 splits and we may write $L = K \oplus \widehat{N} $. Hence $K_R \cap L = K$ and applying the second isomorphism
 theorem for groups we can consider the torus $T_K := K_R/K $ to be a subgroup of $T_L$.
  In fact we get a split exact sequence
 \begin{equation}\label{split2}
 1 \longrightarrow T_K \longrightarrow T_L \stackrel{\Lambda_{\ast}}{\longrightarrow} T_{\widehat N}
  \longrightarrow 1
 \end{equation}

 For any face $F$ of $P$ let $L(F)$ be the sublattice of $L$ generated by the basis vectors $e_i$ such that
 $d_{\mathcal F}(F)$ intersects the $i$th facet of $\RR^m_+$, that is the coordinate hyperplane $\{x_i = 0\}$.
 Note that image of $L(F)$ under $\Lambda$ is precisely $\widehat{N}(F)$, so that the preimage
  $\Lambda^{-1}(\widehat{N}(F))= K\cdot L(F)$.
 Consider the exact sequence
 \begin{equation}\label{split3}
 0 \longrightarrow \frac{K\cdot L(F)}{L(F)} \longrightarrow \frac{L}{L(F)} \stackrel{\Lambda}{\longrightarrow}
 \frac{\widehat N}{\widehat{N}(F)} \longrightarrow 0
 \end{equation}

 Since the dicharacteristic vectors corresponding to the facets whose intersection is $F$ are linearly
 independent, it follows from the definition of $K$ and $\Lambda$ that $K \cap L(F) = \{0\}$. Hence
 by the second isomorphism theorem we have a canonical isomorphism
  \begin{equation}\label{split4}
  \frac{K\cdot L(F)}{L(F)} \cong K
 \end{equation}

 So \eqref{split3} yields
 \begin{equation}\label{split5}
 0 \longrightarrow K \longrightarrow \frac{L}{L(F)} \stackrel{\Lambda}{\longrightarrow}
 \frac{\widehat N}{\widehat{N}(F)} \longrightarrow 0
 \end{equation}

 In general $\displaystyle{\frac{\widehat N}{\widehat{N}(F)}}$ is not a free $\ZZ-$module. Let
 $\widehat{N}^{\prime}(F) = (\widehat{N}(F)\otimes_{\ZZ} \QQ) \cap \widehat{N}$. Define
 \begin{equation}
 \Lambda^{\prime} = \Lambda \circ \phi
 \end{equation}
  where $\phi$ is the canonical projection
  \begin{equation}
  \phi: \frac{\widehat N}{\widehat{N}(F)} \longrightarrow \frac{\widehat N}{\widehat{N}^{\prime}(F)}.
  \end{equation}

 Since $\displaystyle{\frac{\widehat N}{\widehat{N}^{\prime}(F)}}$ is
 free, the following exact sequence splits
 \begin{equation}\label{split6}
 0 \longrightarrow \frac{\Lambda^{-1}(\widehat{N}^{\prime}(F))}{L(F)} \longrightarrow \frac{L}{L(F)} \stackrel{\Lambda^{\prime}}{\longrightarrow}
 \frac{\widehat N}{\widehat{N}^{\prime}(F)} \longrightarrow 0
 \end{equation}

 Denoting the modules in \eqref{split6} by $\bar{K}$, $\bar{L}$ and $\bar{N}$ respectively we
 obtain a split exact sequence of tori
 \begin{equation}\label{split7}
 0 \longrightarrow T_{\bar{K}} \stackrel{\theta_1}{\longrightarrow} T_{\bar{L}}
  \stackrel{\Lambda^{\prime}_{\ast}}{\longrightarrow}
  T_{\bar{N}} \longrightarrow 0
 \end{equation}

 Note that $ K $ is a submodule of same rank of the free module $ \bar{K}$ and there is a
 natural exact sequence
 \begin{equation}\label{split8}
 0 \longrightarrow \frac{\widehat{N}^{\prime}(F)}{\widehat{N}(F)} \longrightarrow T_{K}
  \stackrel{\theta_2}{\longrightarrow} T_{\bar{K}} \longrightarrow 0
 \end{equation}

 The composition
 \begin{equation}
 \theta_1 \circ \theta_2 : T_{K} \longrightarrow T_{\bar{L}}
 \end{equation}
 defines a natural action of $T_{K}$ on $T_{\bar{L}}$ with isotropy $\widehat{G}_F =
 \widehat{N}^{\prime}(F)/\widehat{N}(F)$ and
 quotient $T_{\bar{N}}$.

 Since $T_{\bar{N}}$ is the fiber of $\widehat{\pi}: O \to P$ and $T_{\bar{L}}$ is the fiber of $\pi_s: {\mathcal Z}(P)
 \to P $ over any point in the relative interior of the arbitrary face $F$, it follows $O$ is quotient of
  ${\mathcal Z}(P)$ by the above action of $T_{K}$. This action of $T_K$ is same as the restriction of
  its action on $\CC^m$ as a subgroup of $T_L$ and hence $\CC_{\times}^m$.
   By \eqref{wpdiffeo} it follows that $O$ is the quotient
  of the open set $W(P)$ in $\CC^m$ by the action of the subgroup $T_K \times \exp(V_P^{\perp}) $ of
  $\CC_{\times}^m$,
  \begin{equation}
  O = \frac{W(P)}{T_K \times \exp(V_P^{\perp})}.
   \end{equation}

  The induced action of $\widehat{H} := T_K \times \exp(V_P^{\perp})$
   on the real tangent bundle $\mathcal{T}W(P)$ of $W(P)$
   commutes with the almost complex structure $J: \mathcal{T}W(P) \to \mathcal{T}W(P)$
    obtained by restriction of the
 standard almost complex structure on $\mathcal{T}\CC^m$. Therefore the quotient
  $\widehat{\mathfrak{W}}$ of $\mathcal{T}W(P)$ by $\widehat{H}$
 has the structure of an almost complex orbibundle (or orbifold vector bundle) over $\bf{O}$. Moreover this
 quotient splits, by an Atiyah sequence, as the direct sum of a trivial rank $2(n-m)$ real bundle
 $\widehat{\mathfrak h}$
 over $O$ corresponding to the  Lie algebra of $\widehat H$ and the orbifold
 tangent bundle $\mathcal{T}\bf{O}$ of $\bf{O}$.
 The existence of a stable almost complex structure on $\mathcal{T}\bf{O}$ is thus established.


 $\mathcal{T}\CC^m$ splits naturally into a direct sum of $m$ complex line bundles corresponding to the complex
 coordinate directions which of course correspond to the facets of $P$. We get a corresponding splitting
 $\mathcal{T}W(P) = \oplus C_F$. The bundles $C_F$ are invariant under $J$ as well $\widehat H$.
  Therefore the quotient of $C_F$ by $\widehat{H}$ is a complex orbibundle $\widehat{\nu}(F)$ of rank one on $\bf{O}$ and $\widehat{\mathfrak W}= \oplus \widehat{\nu}(F)$.

 It is not hard to see that the natural action of $T_{\widehat N}$ on $\widehat{\mathfrak W}$ commutes
  with the almost complex structure
 on it.  The quotient ${\mathfrak W}:=\widehat{\mathfrak W}/(N/\widehat{N})$ is an orbibundle on $X$ with
 an induced almost complex structure since $(N/\widehat{N})$ is a subgroup of $T_{\widehat N}$.
 Furthermore $\mathcal{T}{\bf X}$ is the quotient of $\mathcal{T}{\bf O}$ by  $N/\widehat{N}$.
 Therefore ${\mathfrak W} = \mathcal{T}{\bf X} \oplus \mathfrak{h}$ where $\mathfrak h$ is the quotient
 of $\widehat{\mathfrak h}$ by $N/\widehat{N}$. Since the action of $T_{\widehat N}$
 and hence $N/\widehat{N}$ on $\widehat{\mathfrak h}$ is trivial, $\mathfrak h$ is a trivial vector bundle
 on $X$. Hence the almost complex structure on $\mathfrak W$ induces a stable almost complex structure on
 $\mathcal{T}{\bf X}$.   We also have a decomposition $\mathfrak{W}= \oplus \nu(F)$ where the orbifold
 line bundle $\nu(F):= \widehat{\nu}(F)/(N/\widehat{N})$.

\subsection{Line bundles and cohomology}\label{lbc}

 Recall the manifold $\mathcal{Z}(P)$
 of dimension $m+n$ defined in equation \eqref{defwp}.
 Let $ B_L P = ET_{L} \times_{T_{L}} \mathcal{Z}(P)$.
 Since $O = \mathcal{Z}(P) /T_{K}$, $B_L P =  ET_{L} \times_{T_{L}} \mathcal{Z}(P) =
  ET_L\times_{T_K}\mathcal{Z}(P)/(T_{L}/T_K) = ET_L \times (\mathcal{Z}(P)/T_K)/(T_{\widehat{N}})
   \simeq ET_{\widehat{N}} \times_{T_{\widehat{N}}} O
   = ET_{\widehat{N}} \times_{T_{N}} O/(N/\widehat{N}) \simeq ET_N \times_{T_N} X  = BP$.

 Let $ w_1, \ldots ,w_m  $ be the generators of $H^2(BP)$ as
  in subsection \ref{local} and let
$F_i$ denote the  facet of $P$ corresponding to $w_i$. Let $\alpha_{i} : T_L \to T^1$
be the projection onto the $i$-th factor and $\CC(\alpha_{i})$ denote the corresponding $1$--dimensional representation space of $T_L$. Define
$L_i = ET_L \times_{T_L}\widetilde {L}_{i}$, where $\widetilde {L}_{i}= \CC(\alpha_{i})\times \mathcal{Z}(P)$ is the trivial equivariant line bundle over
$\mathcal{Z}(P)$.
Then $L_i$ is an orbifold line bundle over $BP$.
Let $c_{1}(L_i)$ be the first Chern class of $L_i$ in $H^{2}(BP;\QQ)$. We will show that $c_{1}(L_i) = w_i$.

Since the i-th factor of $T_L$ acts freely on $\mathcal{Z}(P) - \pi_s^{-1}(F_i)$, the restriction of $L_i$ to $BP - BF_i $ is trivial. Consider the following commutative
diagram
\[
\begin{CD}
\iota^{*}(L_i) @>>> L_i \\
@VVV  @VVV \\
(BP - BF_i) @>\iota>> BP
\end{CD}
\]
where $\iota$ is inclusion map. By naturality $c_{1}(\iota^{*}(L_i)) = \iota^{*}(c_{1}(L_i))$.
Since the bundle $\iota^{*}(L_i)$ over $BP - BF_i$ is trivial
$ \iota^{*}(c_{1}(L_i)) =  c_{1}(\iota^{*}(L_i)) = 0$.   It is easy to show that
 $B(P-F_i)=ET_L\times_{T_L}(\pi_s^{-1}(P-F_i))\simeq BP-BF_i$.
 From the proof of Theorem \ref{tbp}
 it is evident that $H^{*}(BP-BF_i;\QQ ) \cong SR(P-F_i)$.
  Hence $H^{2}(BP-BF_i)= \bigoplus_{j \neq i}  \QQ w_{j} $. $\iota^{*} : H^{2}(BP;\QQ)
 \to H^{2}(BP-BF_i; \QQ)$ is a surjective homomorphism with kernel $\QQ w_i$
  implying $c_{1}(L_i) \in \QQ w_i$.
  Naturality axiom ensures, as follows, that $c_{1}(L_i)$ is nonzero, so that
   we can identify $c_{1}(L_i)$ with $w_i$.

 Let $F$ be an edge in $F_{i}$. Then $ BF := ET_L \times_{T_L}(\pi_s^{-1}(F))
 \simeq ET_N \times_{T_N}(\pi^{-1}(F))
 = ( ET_N \times_{T_F} \pi^{-1}(F))/(T_N/T_F)
 =  ( ET_N \times (\pi^{-1}(F)/T_F) )/(T_N/T_F)
 \simeq E(T_N/T_F) \times_{T_N/T_F} \pi^{-1}(F)
 \simeq ES^1 \times_{S^1} S^2 $, where $T_F$ is the isotropy subgroup
 of $F$ in $T_N$ and action of $S^1$ on $S^2$ is corresponding action
  of $T_N/T_F$ on $\pi^{-1}(F)$.
 Let $L_{i}(F)$ is the pullback of orbibundle $L_i$. Using Thom isomorphism and cohomology exact
 sequence obtained from
\[
\begin{CD}
BF @>s>> L_{i}(F) \to (L_i(F), BF) \simeq S^2
\end{CD}
\]
 where $s$ is zero section of $L_i$ bundle, we can show $c_1(L_{i}(F))$ is nonzero.
 Since $c_1(L_{i}(F))$ is pullback of $c_1(L_i)$, $c_1(L_i)$ is nonzero. Hence $c_1(L_i) = w_i$.

 Note  that if $F_i$ is the facet of $P$ corresponding to $L_i$,
  $L_i = ET_L \times_{T_L} \widetilde{L}_i = ET_{\widehat N} \times_{T_{\widehat N}}
 (\widetilde{L}_i/ T_K )  = ET_{\widehat N} \times_{T_{\widehat N}} \widehat{\nu}(F_i)
  = ET_{\widehat N} \times_{T_N} \widehat{\nu}(F_i)/(N/{\widehat N})
  \simeq ET_N \times_{T_N} \nu(F_i)$. Let $\mathfrak{j} : \nu(F_i) \hookrightarrow L_i$ be the
  inclusion of fiber covering $j: X \hookrightarrow BP$. Then $\mathfrak{j}^* (L_i) = \nu (F_i)$.
  Hence $ c_1 (\nu(F_i)) = j^* c_1(L_i) = j^* w_i $. Hence by Theorem \ref{cohomring} the first
  Chern classes of the bundles $\nu(F_i)$ generate the cohomology ring of $X$. We also obtain the
  formula for the total Chern class of ${\mathcal T}{\bf X}$ with the stable almost complex structure
  determined by the given dicharacteristic.
  \begin{equation}
  c(\mathcal{T}) = \prod_{i=1}^m (1 + c_1(\nu(F_i)))
   \end{equation}

\subsection{Chern numbers}\label{hg}

Chern numbers of an omnioriented quasitoric orbifold, with the induced stable almost complex structure,
 can be computed using standard localization formulae, given for instance in Chapter 9 of \cite{[CK]}.
The fixed points of the $T_N$ action correspond to the vertices of $P^n$.
While computing the numerator contributions at a vertex, one needs to recall that $T_N$ action
on the bundle $\mathfrak{h}$ is trivial.
We will give a formula for the top Chern number below. In the manifold case similar formula was
obtained by Panov in \cite{[Pan]}. In principle any Hirzebruch genus associated to a
series may be computed similarly.

Fix an orientation for $\bf{X}$ by choosing orientations for $P^n \subset \RR^n$ and $N$.
We order the facets or equivalently the dicharacteristic vectors at each vertex in a
compatible manner as follows. Suppose the vertex $v$ of $P^n$ is the intersection of facets
$F_{i_1}, \ldots, F_{i_n}$. To each of these facets $F_{i_k}$ assign the unique edge
$E_k$ of $P^n$ such that $F_{i_k} \cap E_k = v$. Let $e_k$ be a vector along $E_k$
 with origin at $v$. Then $e_1, \ldots, e_k$ is a basis of $\RR^n$ which is oriented depending
 on the ordering of the facets. We will assume the ordering $F_{i_1}, \ldots, F_{i_n}$ to be
 such that   $e_1, \ldots, e_k$ is positively oriented.

 For each vertex $v$, let $\Lambda_{(v)}$ be the matrix
 $\Lambda_{(v)} = [\lambda_{i_1} \ldots \lambda_{i_n} ]$ whose columns are ordered as described
 above. Let $\sigma(v):= {\rm det} \Lambda_{(v)}$. Then we obtain the following formula for the
 top Chern number,
 \begin{equation}
 c_n( {\mathbf X} ) = {\Sigma}_{v}  \frac{1}{\sigma (v)}
 \end{equation}

\begin{remark}
If the stable almost complex structure of an omnioriented quasitoric orbifold admits a
reduction to an almost complex structure, then $\sigma (v)$ is positive for each vertex $v$.
This follows from comparing orientations, taking $\bf{X}$ to be oriented according to the almost
complex structure.
The converse is true in the case of quasitoric manifolds, see subsection 5.4.2 of \cite {[BP]}.
 The orbifold case remains unsolved at the moment.
\end{remark}

\subsection{Chen-Ruan cohomology groups} We refer the reader to \cite{[CR], [ALR]} for definition and
motivation of the Chen-Ruan cohomology groups of an almost complex orbifold. They may be thought of
as a receptacle for a suitable equivariant Chern character isomorphism from orbifold or equivariant
 K-theory with complex coefficients, see Theorem 3.12 of \cite{[ALR]}.
Briefly, the Chen-Ruan cohomology (with coefficients in $\QQ$ or $\CC$) is the direct sum of the cohomology
of the underlying space and the cohomology of certain subspaces of it called {\it twisted sectors} which
are counted with multiplicities and rational degree shifts depending on the orbifold structure. The 
multiplicities depend on the number of conjugacy classes in the local groups and the degree shifts 
are related to eigenvalues of the linearlized local actions.
 The verification of the statements below is straightforward and left to the interested reader.

For an almost complex quasitoric orbifold $\bf{X}$, each twisted sector is a $T_N$--invariant subspace
$X(F)$ as described in subsection \ref{chars}. The contribution of $X(F)$ is counted with  multiplicity
 one less than the order of the group $G_F$, corresponding to the nontrivial elements of $G_F$.
 However
 the degree shift of these contributions depend on the particular element of $G_F$ to which the twisted
 sector corresponds. If $g = (a + N(F)) \in G_F$ where $a \in N^{*}(F)$, then the degree shift $2 \iota(g)$
 can be calculated as follows. Suppose $\lambda_1, \ldots, \lambda_k$ is the defining basis of $N(F)$. Then
 $a$ can be uniquely expressed as
 $a= \sum_{i=1}^k q_i \lambda_i$ where each $q_i$ is a rational number in the interval $[0,1)$, and
  $\iota(g)= \sum_{i=1}^k q_i$. Note that the rational homology and hence rational cohomology of
   $X(F)$ can be computed using its combinatorial model given in subsection \ref{chars}.

 Recall from subsection \ref{lbc} that if $N = \widehat{N}$ then $\bf{X}$ is the quotient
  of the manifold $\mathcal{Z}(P)$ by the group $T_K$. In this case the $T_K-$bundles $\widetilde{L}_i$
  over $\mathcal{Z}(P)$ generate the complex orbifold $K-$ring of $\bf{X}$. The images of their tensor powers
  under the equivariant
  Chern character map generate the Chen-Ruan cohomology of $\bf{X}$. These follow since the restrictions of
  the bundles $L_i$ to the subspaces $X(F)$ generate the cohomology ring of $X(F)$.

{\bf ACKNOWLEDGEMENT.} The authors would like to thank Alejandro Adem, Mahuya Datta,
 Goutam Mukherjee and Debasis Sen for helpful suggestions. The first author also thanks
 B. Doug Park, Parameswaran Sankaran and Sashi Mohan Srivastava for stimulating discussions.
  He would also like to record his gratitude to CMI
 and TIFR, institutes where part of the work was done, for their support and hospitality.

\renewcommand{\refname}{References}

\vspace{1cm}

\vfill

\begin{thebibliography}{[ALR]}

\bibitem{[ALR]} A. Adem, J. Leida and Y. Ruan: Orbifolds and stringy topology,  Cambridge
 Tracts in Mathematics, {\bf 171}, Cambridge University Press, Cambridge, 2007.

\bibitem{[BR]} V. M. Buchstaber and N. Ray: {\it Tangential structures on toric manifolds, and connected sums of polytopes},
 Internat. Math. Res. Notices {\bf 2001}, no. 4, 193--219.

\bibitem{[Bre]} G. E. Bredon, Introduction to compact transformation groups,
 Pure and Applied Mathematics, Vol. 46. Academic Press, New York-London, 1972.

\bibitem{[BP]} V. M. Buchstaber and T. E. Panov : {\it Torus actions and their applications in topology and combinatorics},
 University Lecture Series {\bf 24}, American Mathematical Society, Providence, RI, 2002.


\bibitem{[Civ]} Y. Civan: {\it Some examples in toric geometry}, arXiv: math.AT/0306029 v2.

\bibitem{[CK]}  D. A. Cox and S. Katz: Mirror symmetry and algebraic geometry,
 Mathematical Surveys and Monographs, 68,
 American Mathematical Society, Providence, RI, 1999.

 \bibitem{[CR]} W. Chen and Y. Ruan: {\it A new cohomology theory of orbifold}, Comm. Math. Phys. {\bf 248}
  (2004), no. 1, 1--31.

\bibitem{[Dav]} M. W. Davis: {\it Smooth $G$-manifolds as collections of fiber bundles},
 Pacific J. Math. {\bf 77} (1978), no. 2, 315--363.

\bibitem{[DJ]} M. W. Davis and T. Januszkiewicz: {\it Convex polytopes, Coxeter orbifolds and torus actions},
 Duke Math. J. {\bf 62} (1991), no.2, 417--451.

\bibitem{[Gor]}  M. R. Goresky: {\it Triangulation of stratified objects}, Proc. Amer. Math. Soc.
 {\bf 72} (1978), no. 1, 193--200.

\bibitem{[Ha]} A. Hatcher: Algebraic topology, Cambridge University Press, Cambridge, 2002.

\bibitem{[HM]} A. Hattori and M. Masuda: {\it Theory of multi-fans}, Osaka J. Math. {\bf 40} (2003), no. 1, 1--68.

 \bibitem{[MP]}  M. Masuda and T. Panov: {\it On the cohomology of torus manifolds},
   Osaka J. Math. {\bf 43} (2006), no. 3, 711--746.

\bibitem{[MS]} J. W. Milnor and J. D. Stasheff: Characteristic classes,
 Annals of Mathematics Studies, No. 76. Princeton University Press, Princeton, N. J.;
 University of Tokyo Press, Tokyo, 1974.

\bibitem{[MM]} I. Moerdijk and J. Mrcun: Introduction to foliations and Lie groupoids,
 Cambridge Studies in Advanced Mathematics {\bf 91}, Cambridge University Press, Cambridge, 2003.


\bibitem{[Pan]} T. E. Panov: {\it Hirzebruch genera of manifolds with torus action}, (Russian) Izv. Ross. Akad. Nauk Ser. Mat.
 {\bf 65} (2001), no. 3, 123--138; translation in Izv. Math. {\bf 65} (2001), no. 3, 543--556.

\bibitem{[Pr]} D. Prill: {\it Local classification of quotients of complex manifolds
 by discontinuous groups},
 Duke Math. J. {\bf 34} (1967), 375--386.

\bibitem{[Sc]} P. Scott: {\it The geometries of 3-manifolds},  Bull. London Math. Soc.
{\bf 15} (1983), no. 5, 401--487.

\bibitem{[Th]} W. P. Thurston: {\it The geometry and topology of three-manifolds}, Princeton
 lecture notes, http://www.msri.org/publications/books/gt3m/

\end{thebibliography}
\end{document}